\newtheorem{thm}{Theorem}[section]
\newtheorem{cor}[thm]{Corollary}
\newtheorem{lem}[thm]{Lemma}
\newtheorem{prop}[thm]{Proposition}
\theoremstyle{definition}
\newtheorem{df}[thm]{Definition}
\newtheorem{rmk}[thm]{Remark}
\newtheorem{qn}[thm]{Question}
\newcommand{\R}{\mathbb R}
\newcommand{\Z}{\mathbb Z}
\newcommand{\Q}{\mathbb Q}
\newcommand{\Sph}{\mathbb S}
\newcommand{\Hom}{\textup{Hom}}
\newcommand{\Ext}{\textup{Ext}}
\newcommand{\id}{\textup{id}}
\newcommand{\holim}{\textup{holim}\,}
\newcommand{\ra}{\longrightarrow}
\newcommand{\sma}{\wedge}
\newcommand{\ti}{\widetilde}
\newcommand{\simar}{\overset\sim\longrightarrow}
\newcommand{\mc}{\mathcal}
\newcommand{\sk}{\textup{Sk}}
\newcommand{\cyc}{\textup{cyc}}
\newcommand{\tr}{\textup{tr}\,}
\definecolor{grey}{gray}{0.4}
\let\SK@label\label\fi
 \let\your@thm\@thm
 \def\@thm#1#2#3{\gdef\currthmtype{#3}\your@thm{#1}{#2}{#3}}
 \def\mylabel#1{{\let\your@currentlabel\@currentlabel\def\@currentlabel
  {\currthmtype~\your@currentlabel}
 \SK@label{#1@}}\label{#1}}
\title{The topological cyclic homology of the dual circle}
\author{Cary Malkiewich}
\begin{document}

%% Title, authors and addresses

%% use the tnoteref command within \title for footnotes;
%% use the tnotetext command for theassociated footnote;
%% use the fnref command within \author or \address for footnotes;
%% use the fntext command for theassociated footnote;
%% use the corref command within \author for corresponding author footnotes;
%% use the cortext command for theassociated footnote;
%% use the ead command for the email address,
%% and the form \ead[url] for the home page:
%% \title{Title\tnoteref{label1}}
%% \tnotetext[label1]{}
%% \author{Name\corref{cor1}\fnref{label2}}
%% \ead{email address}
%% \ead[url]{home page}
%% \fntext[label2]{}
%% \cortext[cor1]{}
%% \address{Address\fnref{label3}}
%% \fntext[label3]{}

% \title{A geometric splitting of ring spectra, and the topological cyclic homology of the dual circle}
% \title{The topological cyclic homology of the dual circle}

%% use optional labels to link authors explicitly to addresses:
%% \author[label1,label2]{}
%% \address[label1]{}
%% \address[label2]{}

% \author{Cary Malkiewich}

% \address{Department of Mathematics \\
% University of Illinois at Urbana-Champaign \\
% 1409 W Green St \\
% Urbana, IL 61801 \\
% \texttt{cmalkiew@illinois.edu}}

\maketitle

\begin{abstract}
%% Text of abstract
We give a new proof of a result of Lazarev, that the dual of the circle $S^1_+$ in the category of spectra is equivalent to a strictly square-zero extension as an associative ring spectrum. As an application, we calculate the topological cyclic homology of $DS^1$ and rule out a Koszul-dual reformulation of the Novikov conjecture.
%  The original proof was based on an obstruction theory for ring spectra, whereas our argument uses a more geometric construction.
\end{abstract}

\parskip 0ex
\tableofcontents
\parskip 2ex

% \begin{keyword}
%% keywords here, in the form: keyword \sep keyword

% highly structured ring spectra, topological cyclic homology, algebraic $K$-theory

%% PACS codes here, in the form: \PACS code \sep code

%% MSC codes here, in the form: \MSC code \sep code
%% or \MSC[2008] code \sep code (2000 is the default)

% \MSC[2010] 19D55 \sep 55P43
% 
% \end{keyword}
% 
% \end{frontmatter}

%% \linenumbers

%% main text

\section{Introduction.}

%Rewrite. Why is the theorem useful? What's cool and different about your second proof?
%Application: why is the full force of the main theorem important for this application?

It is an elementary theorem in topology that a reduced suspension $\Sigma X$ has vanishing cup products in positive degrees. In other words, the cohomology ring $H^*(\Sigma X)$ is a square-zero extension of $\Z$.

One might ask whether this theorem lifts to the commutative ring spectrum
\[ D(\Sigma X) = F((\Sigma X)_+,\Sph^0) \]
with multiplication given by the diagonal map of $\Sigma X$. In other words, is it equivalent as a ring spectrum to a square-zero extension of the sphere spectrum,
\[ \Sph \vee \tilde{D}(\Sigma X) = \Sph \vee F(\Sigma X,\Sph^0)? \]
The answer is as follows.
% It is easy to check that they are equivalent as ring objects in the stable homotopy category, but this is not as useful on its own, as it tells us nothing about algebraic $K$-theory, Hochschild homology, and other invariants of highly structured ring spectra. We will actually show the following stronger result.
\begin{thm}\label{thm:intro_main}
There is an equivalence in the homotopy category of augmented $A_\infty$ ring spectra
\[ D(\Sigma X) \simeq \Sph \vee \tilde{D}(\Sigma X) \]
\end{thm}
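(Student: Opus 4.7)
The plan is to dualize and argue on the coalgebra side. Because the functor $D$ restricts to a contravariant equivalence between (suitably finite) coaugmented $A_\infty$-coalgebra spectra and augmented $A_\infty$-ring spectra, the theorem reduces to exhibiting an equivalence of coaugmented $A_\infty$-coalgebras
\[ (\Sigma X)_+ \simeq \Sph \vee \Sigma X, \]
where the right-hand side carries the \emph{trivial} coalgebra structure, with reduced coproduct strictly zero. Dualizing that trivial coproduct produces precisely the square-zero augmented ring structure in the statement.

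The first step is the classical homotopy-level fact: the reduced diagonal $\bar\Delta \colon \Sigma X \to \Sigma X \wedge \Sigma X$ is null-homotopic. The unreduced diagonal of $\Sigma X$ factors up to homotopy through $\Sigma X \vee \Sigma X$ via the co-$H$-space pinch map, and the composite $\Sigma X \vee \Sigma X \hookrightarrow \Sigma X \times \Sigma X \to \Sigma X \wedge \Sigma X$ is null. Fixing such a pinch gives a preferred null-homotopy of $\bar\Delta$, which is the binary piece of data needed for an $A_\infty$-equivalence.

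The core work is to promote this to coherent $A_\infty$-triviality. I would exploit the stronger fact that $\Sigma X$ is a cosuspension, which encodes higher coherences via the James model $\Sigma X \simeq B(\Omega\Sigma X)$ together with the stable James splitting $\Sigma^\infty_+ \Omega\Sigma X \simeq \bigvee_n \Sigma^\infty (\Sigma X)^{\wedge n}$. Using this input, I would either (i) construct an explicit cofibrant model of $(\Sigma X)_+$ in coaugmented $A_\infty$-coalgebras in which every reduced higher coproduct $\Sigma X \to (\Sigma X)^{\wedge n}$ is \emph{strictly} zero, guided by the James decomposition, or (ii) run an inductive obstruction argument showing that the obstructions to $A_\infty$-triviality live in homotopy groups of mapping spectra $F((\Sigma X)^{\wedge n}, (\Sigma X)^{\wedge m})$ that are systematically killed once the cosuspension structure is accounted for. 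Dualizing the resulting equivalence yields the theorem.

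The main obstacle is precisely this coherence step. The binary triviality of $\bar\Delta$ is a one-line classical fact; what requires real technology is producing a compatible system of null-homotopies for every higher reduced structure map simultaneously, without accumulating secondary obstructions that would block the full $A_\infty$ equivalence. I expect the cleanest implementation to route through the James model, since the cosuspension structure is exactly the datum that organizes the higher coherences that need to be trivialized.
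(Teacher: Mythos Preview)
Your proposal has the right starting intuition---work on the coalgebra side, since the null-homotopy of the reduced diagonal lives there---but neither of your two routes to the higher coherences is a complete argument, and the paper takes a different, more explicit path.

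First, your reduction via duality is a genuine gap. The assertion that $D$ restricts to a contravariant equivalence between (suitably finite) coaugmented $A_\infty$-coalgebra spectra and augmented $A_\infty$-ring spectra is not a standard result and is delicate even under finiteness hypotheses; you would need to set up and justify such a statement before the reduction goes through. The paper does not rely on any such equivalence of categories. It does observe that the relevant operad action on $\tilde D(\Sigma X)$ factors through explicit maps of spaces $\Sigma X \to (\Sigma X)^{\wedge n}$, but it uses this only to write down formulas, not to transport an equivalence across a duality.

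Second, your option (i), invoking the James model or the stable James splitting, is too vague to constitute a proof. The James splitting is an equivalence of underlying spectra; you give no mechanism by which it yields a strictly trivial $A_\infty$-coalgebra model of $(\Sigma X)_+$, and it is not clear one exists along those lines. Your option (ii), an inductive obstruction argument in the mapping spectra $F((\Sigma X)^{\wedge n},(\Sigma X)^{\wedge m})$, is essentially Lazarev's original proof, which the paper cites as prior work and then deliberately replaces with something more constructive.

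What the paper actually does is write down, by hand, a non-symmetric operad $\mathcal O$ with $\mathcal O(n) = (\mathbb R_{\geq 0})^{n-1}$ and an explicit action on $\tilde D(\Sigma X)$, induced at the space level by
\[
(s,x) \longmapsto (s+t_1,\ldots,s+t_{n-1},s,x,\ldots,x).
\]
This single formula simultaneously encodes the diagonal (at $t_i = 0$), the zero map (for $t_i \geq 1$), and all the coherences interpolating between them. Two suboperads $\mathcal A$ and $\mathcal O'$ of $\mathcal O$ pick out, respectively, the diagonal multiplication and the zero multiplication; a zig-zag of bar constructions then converts the resulting diagram of operads into the desired zig-zag of equivalences of non-unital associative ring spectra, and applying $\Sph \vee -$ finishes. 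The coherence problem you flag as the main obstacle is dissolved in one stroke by this explicit operad, with no obstruction theory and no appeal to the James model.
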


This theorem, without the prefix ``augmented,'' was first established by Lazarev, using obstruction theory for maps of $A_\infty$ ring spectra (\cite{lazarev2004spaces}, 4.1). In this paper, we give a new proof that is more constructive. The idea is that the diagonal map $S^1 \ra S^2$ is nullhomotopic, and we extend this nullhomotopy to the structure of an $A_\infty$ coalgebra on $S^1$. Our construction is fairly geometric, so it provides useful intuition for Theorem \ref{thm:intro_main} as well.

We also remark that Theorem \ref{thm:intro_main} cannot be lifted to an equivalence of augmented $E_\infty$ ring spectra, when $X$ is a finite complex with nontrivial cohomology. Indeed, it is an old observation of Miller and McClure that the Steenrod operations on $\alpha \in H^*(\Sigma X)$ arise from the extended power operations on the ring spectrum $D(\Sigma X)$ \cite[III.1.2]{h_infinity}. Restricting to reduced cohomology $\alpha \in \ti H^*(\Sigma X)$, this leads to a direct construction of $\textup{Sq}^i(\alpha)$ from the map of spectra
% \[ H^*(B\Sigma_2;\mathbb F_2) \otimes H^*(\Sigma X;\mathbb F_2)^{\otimes 2} \cong \pi_{-*}((E\Sigma_2)_+ \sma_{\Sigma_2} D(\Sigma X)^{\sma 2} \sma H\mathbb F_2) \ra \pi_{-*}(D(\Sigma X) \sma H\mathbb F_2) \cong H^*(\Sigma X;\mathbb F_2) \]
% (E\Sigma_2)_+ \sma_{\Sigma_2} \ti{D}(\Sigma X)^{\sma 2} \sma H\mathbb F_2 \ar[d] & \\
\begin{equation}\label{eq:steenrod}
\xymatrix @R=.8em{
(E\Sigma_2)_+ \sma_{\Sigma_2} \ti{D}(\Sigma X)^{\sma 2} \ar[r] & \ti{D}(\Sigma X)
}
\end{equation}
given by the dual of the diagonal of $\Sigma X$. The $E_\infty$ variant of Theorem \ref{thm:intro_main} would imply that this map of spectra vanishes, and hence all the Steenrod operations on $\ti H^*(\Sigma X)$ would vanish (including $\textup{Sq}^0$).

The motivation for Theorem \ref{thm:intro_main} comes from the algebraic $K$-theory of ring spectra. The ring spectrum $DX$ is Koszul dual to $\Sigma^\infty_+ \Omega X$ for finite, simply-connected $X$ \cite{blumberg2011derived}. This begs the question of whether the behavior of $K(DX)$ as $X$ varies has some parallel with the behavior of Waldhausen's functor $A(X)$ \cite{1126}.

A reasonable guess is given by the following analogue of the Novikov conjecture. Since $K(DX)$ is a contravariant, homotopy-invariant functor of the space $X$, it has a homotopy sheafification map (also known as a coassembly map or a Thomason limit problem map)
\begin{equation}\label{eq:intro_coassembly}
K(DX) \overset{c\alpha}\ra F(X_+,K(\Sph)).
\end{equation}
\begin{qn}\label{q:dual_novikov}
Is the map \eqref{eq:intro_coassembly} surjective on the rational homotopy groups $\pi^\Q_{\geq 0}$, for finite complexes $X$?
\end{qn}

By Theorem \ref{thm:intro_main}, this question is easiest to approach when $X$ is itself a reduced suspension. We recall that the algebraic $K$-theory of a ring spectrum $R$ admits a trace map to the topological Hochschild homology $THH(R)$ \cite{bhm,madsen_survey,dundas2012local}, which has a useful splitting when $R$ is a square-zero extension (Prop \ref{prop:THH_of_square_zero_extension}, cf. \cite[V.3.2]{dundas2012local}, \cite[(6.2.1)]{hesselholt1997k}). In the case where $R = D(\Sigma X)$ this implies
\begin{cor}\label{cor:intro_thh_splitting}
There is a natural equivalence of genuine $S^1$-spectra
\[ THH(D\Sigma X) \simeq \Sph \vee \Sigma^{-1}\left(\bigvee_{n=1}^\infty \ti D(X^{\sma n}) \sma_{C_n} S^1_+ \right) \]
where as above $\ti DX = F(X,\Sph)$. In particular,
\[ THH(DS^1) \simeq \Sph \vee \bigvee_{n=1}^\infty \Sigma^{-1}_+ (S^1/C_n) \]
where $C_n \leq S^1$ denotes the cyclic group of order $n$.
\end{cor}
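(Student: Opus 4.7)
The plan is to combine Theorem~\ref{thm:intro_main} with the known splitting of $THH$ for a square-zero extension, then unwind the answer using $\ti D(\Sigma X)\simeq\Sigma^{-1}\ti DX$.

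First, by Theorem~\ref{thm:intro_main}, $D(\Sigma X)$ is equivalent as an augmented $A_\infty$ ring spectrum to $\Sph\vee M$, where $M:=\ti D(\Sigma X)$ has square-zero multiplication. Since $THH$ is a functor on augmented $A_\infty$ ring spectra valued in genuine $S^1$-spectra, this reduces the problem to identifying $THH(\Sph\vee M)$.

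Second, I would apply Proposition~\ref{prop:THH_of_square_zero_extension}. For any square-zero extension the cyclic bar construction $N^{\cyc}(\Sph\vee M)$ splits naturally according to the number $n$ of $M$-factors in each simplicial degree, because all face maps involving $M\cdot M$ vanish. The $n$-factor piece is a genuine $S^1$-spectrum built from $M^{\sma n}$ (with $C_n$ acting by cyclic permutation), smashed with the cyclic ``geometry'' — concretely, the reduced regular representation sphere $\Sph^{\bar\rho_n}$ of $C_n$ together with the induction $S^1_+\sma_{C_n}(-)$ carrying the ambient $S^1$-action.

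Third, I would substitute $M=\ti D(\Sigma X)$. The adjunction $F(S^1\sma X,\Sph)\simeq\Sigma^{-1}F(X,\Sph)$ gives $M\simeq\Sigma^{-1}\ti DX$, and for finite $X$ the dualizability of $\Sigma X$ upgrades this to a $C_n$-equivariant equivalence
\[
  M^{\sma n}\;\simeq\;\Sph^{-\rho_n}\sma\ti D(X^{\sma n}),
\]
where $\rho_n$ is the regular representation of $C_n$ and the $C_n$-action on the right is diagonal (permuting the $n$ copies of $X$ and the $n$ copies of $S^{-1}$). Since $\rho_n=\bar\rho_n+1$ as real $C_n$-representations, smashing with the $\Sph^{\bar\rho_n}$ factor from the previous step cancels all but a trivial $\Sigma^{-1}$, leaving $\Sigma^{-1}\ti D(X^{\sma n})$ with trivial $C_n$-action. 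Feeding this into the splitting yields
\[
  THH(D(\Sigma X))\;\simeq\;\Sph\;\vee\;\bigvee_{n\geq 1}\Sigma^{-1}\bigl(\ti D(X^{\sma n})\sma_{C_n}S^1_+\bigr),
\]
as claimed. Specializing to $X=S^0$, we have $\ti D(X^{\sma n})=\Sph$ with trivial $C_n$-action, so $\Sph\sma_{C_n}S^1_+=(S^1/C_n)_+$ and the formula reduces to $\Sph\vee\bigvee_{n\geq 1}\Sigma^{-1}(S^1/C_n)_+$.

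The main obstacle is the equivariant bookkeeping: one must verify that the cyclic permutation action on $(\Sigma^{-1})^{\sma n}$ really is the inverse regular representation sphere $\Sph^{-\rho_n}$, and that this cancels the $\Sph^{\bar\rho_n}$ from Proposition~\ref{prop:THH_of_square_zero_extension} cleanly — equivariantly, not just on underlying spectra — so that the final $\Sigma^{-1}$ carries the trivial $C_n$-action and can be pulled outside the wedge.
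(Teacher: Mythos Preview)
Your proposal is correct and follows essentially the same route as the paper: reduce to a square-zero extension via Theorem~\ref{thm:intro_main}, apply Proposition~\ref{prop:THH_of_square_zero_extension}, and then cancel the reduced-regular-representation sphere against the $n$-fold smash of the desuspension in $\ti D(\Sigma X)\simeq\Sigma^{-1}\ti DX$. The paper resolves exactly the obstacle you flag --- verifying the cancellation as a genuine $C_n$-equivalence --- by choosing the explicit orthogonal model $F_1S^0$ for $\Sigma^{-1}\Sph$, so that $(F_1S^0)^{\sma n}\cong F_{\rho_{C_n}}S^0$ on the nose, and then checking the resulting zig-zag on geometric fixed points.
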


This splitting interacts nicely with the ``cyclotomic'' structure on $THH$ defined in \cite{bhm}. We may therefore use it to compute the topological cyclic homology $TC(R)$, a finer invariant which also receives a trace map from $K(R)$. The calculation is particularly tractable for the ring spectrum $DS^1$.
\begin{thm}\label{thm:intro_tc_splitting}
There is an equivalence after $p$-completion
\[ TC(D S^1) \simeq \Sph \vee \Sigma \mathbb{CP}^\infty_{-1} \vee \bigvee_{n \in \mathbb{N}} E \]
in which $E$ is the homotopy fiber of the wedge of transfers
\[ \left(\bigvee_{k=0}^\infty \Sigma^\infty_+ BC_{p^k}\right) \ra \Sigma^{-1}\Sigma^\infty_+ B\Z \]
along $S^1$-bundles of the form
\[ B\Z \overset{\cdot p^k}\ra B\Z \ra BC_{p^k} \]
\end{thm}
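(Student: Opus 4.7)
The plan is to upgrade the splitting of Corollary \ref{cor:intro_thh_splitting} to a splitting of cyclotomic spectra and then compute $TC$ summand by summand. The first step is to revisit Proposition \ref{prop:THH_of_square_zero_extension}, which decomposes $THH$ of a square-zero extension as a wedge indexed by cyclic words, and to verify that this is a decomposition of cyclotomic spectra, not merely of genuine $S^1$-spectra. The cyclotomic Frobenius $\varphi_p$ acts on cyclic words by the $p$-fold power map, so it carries the $n$-th summand into the $(np)$-th. Specializing $R = DS^1$ (for which $\tilde{D}S^1 \simeq \Sigma^{-1}\Sph$) reproduces the splitting of Corollary \ref{cor:intro_thh_splitting}, now equipped with an explicit cyclotomic structure in which the indexing set $\Z_{>0}$ carries the action $n \mapsto np$.

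After $p$-completion, the non-unit summands partition into Frobenius orbits indexed by natural numbers coprime to $p$; each orbit contains the pieces $\Sigma^{-1}_+(S^1/C_{mp^k})$ for $k \geq 0$, and the set of orbits is in bijection with $\mathbb{N}$. The unit summand $\Sph$ contributes $TC(\Sph)^\wedge_p \simeq \Sph \vee \Sigma\mathbb{CP}^\infty_{-1}$ by the classical computation of B\"okstedt--Hsiang--Madsen. It therefore remains to identify the $TC$ of a single orbit with the spectrum $E$ in the statement.

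For a fixed orbit, the underlying $S^1$-spectra $\Sigma^{-1}_+(S^1/C_{mp^k})$ are (non-equivariantly) all equivalent to $\Sigma^{-1}\Sigma^\infty_+ B\Z$, while the $S^1$-action rescales. Running the Hesselholt--Madsen formula $TC = \mathrm{hofib}(TR \xrightarrow{1-F} TR)$ reduces to computing the geometric fixed point strata $\Phi^{C_{p^j}}$ of this wedge together with the restriction maps between them. The only nontrivial geometric fixed points of $\Sigma^{-1}_+(S^1/C_{n})$ occur at subgroups of $C_n$, and the $C_{p^j}$-fixed stratum (for $j \leq k$) recovers $\Sigma^\infty_+ BC_{p^{k-j}}$ together with its residual $S^1/C_{p^j}$-action; this is the standard calculation for cyclic quotients of a free circle, specialized to a point. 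Assembling these strata and taking the fiber of $1-F$ produces the claimed fiber of a wedge of $S^1$-transfers $\bigvee_k \Sigma^\infty_+ BC_{p^k} \to \Sigma^{-1}\Sigma^\infty_+ B\Z$, in which the degree shift $\Sigma^{-1}$ reflects the $S^1$-bundle structure and the transfer maps are those associated with $B\Z \xrightarrow{\cdot p^k} B\Z \to BC_{p^k}$.

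The hard part is the cyclotomic upgrade of Proposition \ref{prop:THH_of_square_zero_extension} and the bookkeeping of how the Frobenius and restriction maps act on the individual cyclic-word summands; once that structure is explicit, identifying each orbit's contribution with $E$ is a matter of matching geometric fixed point data with the transfer maps of the bundles $B\Z \xrightarrow{\cdot p^k} B\Z \to BC_{p^k}$. An additional subtlety is confirming that the reparametrizations $S^1/C_{mp^k} \cong S^1$ intertwine the cyclotomic structures in the correct way so that the orbit sum collapses to one copy of $E$ for each $m$ coprime to $p$.
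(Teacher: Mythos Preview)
Your high-level plan matches the paper's: upgrade the $THH$-splitting to a splitting of cyclotomic spectra (the paper does this via Remark~\ref{rem:cm}), peel off the unit summand $\Sph$ whose $TC$ is $\Sph \vee \Sigma\mathbb{CP}^\infty_{-1}$, and then compute $TC$ on each orbit of $n \mapsto np$ separately. So far so good.

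The gap is in your orbit computation. You assert that ``the $C_{p^j}$-fixed stratum (for $j \leq k$) recovers $\Sigma^\infty_+ BC_{p^{k-j}}$.'' This is false: the space $(S^1/C_{mp^k})^{C_{p^j}}$ is all of $S^1/C_{mp^k}$ when $j \leq k$ and empty otherwise, so the geometric $C_{p^j}$-fixed points of $\Sigma^{-1}\Sigma^\infty_+ S^1/C_{mp^k}$ is again a shifted $\Sigma^\infty_+ S^1 \simeq \Sigma^\infty_+ B\Z$, never $\Sigma^\infty_+ BC_{p^{k-j}}$. The classifying spaces $BC_{p^k}$ do not arise from fixed points at all; they come from \emph{homotopy $S^1$-orbits}, via $(S^1/C_n)_{hS^1} \simeq BC_n$. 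Your sketch contains no mechanism that produces these homotopy orbits, and hence no mechanism that produces the circle transfers in the description of $E$.

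The paper's route is rather different from ``assembling strata.'' It applies the tom Dieck splitting to the genuine fixed points $(\Sigma^\infty_+ X)^{C_{p^n}}$, under which $R$ deletes the bottom summand $\Sigma^\infty_+ X_{hC_{p^n}}$ and $F$ acts by finite transfers on all summands but the top one (Proposition~\ref{prop:frobenius_on_splitting}). Passing to the $R$-limit gives $\ti{TR} \simeq \Sigma^{-1}\prod_{j \geq 0} \Sigma^\infty_+ X_{hC_{p^j}}$, and a short lemma on fibers of split fiber sequences converts $\mathrm{hofib}(F-\id)$ into a homotopy pullback square. The crucial ingredient you are missing is Lemma~\ref{lem:inverse_lim_of_transfers}: after $p$-completion the inverse limit of the finite transfers $\Sigma^\infty_+ X_{hC_{p^j}}$ is $\Sigma\,\Sigma^\infty_+ X_{hS^1}$. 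This is exactly what makes $\bigvee_k \Sigma^\infty_+ BC_{p^k}$ and the wedge of $S^1$-transfers appear.
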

\begin{cor}\label{cor:intro_coassembly_zero}
The coassembly map for $K(DS^1)$ is not surjective on rational homotopy groups in degree 4. More generally, it is rationally zero in degree $4i$ when $i \geq 1$ and there exists a regular prime $p$ such that $p \geq 2i+3$.
%The coassembly map for $K(DS^1)$ is zero on rational homotopy groups in degree 4. More generally, it is zero in degree $4i$ when $i > 0$ and there exists a prime $p$ satisfying Condition C(i) from \cite[p.535]{bhm}, such that $p \geq 2i+3$.
\end{cor}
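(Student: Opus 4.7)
The plan is to transport the coassembly map through the cyclotomic trace, use Theorem \ref{thm:intro_tc_splitting} to identify the relevant $TC$ groups, and appeal to classical rational computations of $K(\Sph)$ and $TC(\Sph)$.

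For the target: since $F(S^1_+,Y)\simeq Y\times \Omega Y$ for any spectrum $Y$, we have $\pi_{4i}^{\Q} F(S^1_+,K(\Sph))\cong \pi_{4i}^{\Q} K(\Sph)\oplus \pi_{4i+1}^{\Q} K(\Sph)$. By classical computations (Borel, Quillen, Waldhausen), the first summand is $0$ and the second is $\Q$ for $i\ge 1$, so the target has rational rank $1$ with generator in the $\Omega K(\Sph)$ factor. Using the ring splitting $DS^1\simeq \Sph\vee \tilde DS^1$ from Theorem \ref{thm:intro_main} we get $K(DS^1)\simeq K(\Sph)\vee K^{\textup{red}}(DS^1)$; on the $K(\Sph)$ summand, by naturality of coassembly in the contravariant space variable, the induced map to $F(S^1_+,K(\Sph))$ is the inclusion of constant loops coming from $S^1\to *$, which lies entirely in the first factor and contributes nothing rationally in degree $4i$. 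Everything therefore reduces to showing that the induced map $\pi_{4i}^{\Q} K^{\textup{red}}(DS^1)\to \pi_{4i+1}^{\Q} K(\Sph)$ is zero.

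Next, trace to $TC$: since $DS^1$ is a square-zero (hence nilpotent) extension of $\Sph$, Dundas-Goodwillie-McCarthy gives a $p$-completed equivalence $K^{\textup{red}}(DS^1)_p^{\wedge}\simeq TC^{\textup{red}}(DS^1)_p^{\wedge}$. Combined with Bökstedt-Hsiang-Madsen's identification $TC(\Sph)_p^{\wedge}\simeq (\Sph\vee \Sigma \mathbb{CP}^\infty_{-1})_p^{\wedge}$, Theorem \ref{thm:intro_tc_splitting} yields $K^{\textup{red}}(DS^1)_p^{\wedge}\simeq \bigvee_n E_p^{\wedge}$. Naturality makes the $p$-completed coassembly factor through the $TC$-coassembly $\bigvee_n E\to \Omega TC(\Sph)$. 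The defining fiber sequence for $E$ has middle term $\bigvee_k \Sigma^\infty_+ BC_{p^k}$ (rationally concentrated in degree $0$, since $BC_{p^k}$ is rationally contractible) and right term $\Sigma^{-1}\Sigma^\infty_+ B\Z\simeq \Sigma^{-1}\Sph\vee\Sph$ (rationally concentrated in degrees $-1,0$). The long exact sequence on rational homotopy places $E\otimes \Q$ in degrees $\{-2,-1,0\}$, so $\pi_{4i}^{\Q}\bigvee_n E = 0$ for $i\ge 1$ and the $p$-completed map is rationally zero.

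The main obstacle is the final descent: converting this $p$-completed rational vanishing into rational vanishing of the original integral map into $K(\Sph)$. This is delicate because $\bigvee_n E$ is an infinite wedge, and $p$-completion can introduce rationally nontrivial terms via $\lim^1$ and the failure of completion to commute with wedges. For this step I would invoke the regularity and the bound $p\ge 2i+3$: under these assumptions the Borel-Quillen-Kummer computation identifies a $\Z_p$ direct summand of $\pi_{4i+1} K(\Sph)_p^{\wedge}$ whose rationalization generates $\pi_{4i+1}^{\Q} K(\Sph)=\Q$, and this summand is detected in the torsion-free part of $\pi_{4i+1} TC(\Sph)_p^{\wedge}$ via the trace. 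Since the image of the $p$-completed map lies in the torsion of $\pi_{4i+1} TC(\Sph)_p^{\wedge}$, it cannot meet this $\Z_p$-generator, forcing the integral map into $\pi_{4i+1} K(\Sph)$ to land in torsion and hence be rationally zero.
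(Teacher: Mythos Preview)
Your strategy---push coassembly through the cyclotomic trace and use rational injectivity of $K(\Sph)\to TC(\Sph)^\wedge_p$ for a regular prime---is exactly the paper's. The paper runs it through the naturality square
\[
\xymatrix{
K(DS^1) \ar[r]^-{c\alpha} \ar[d]^-{\textup{trc}} & DS^1 \sma K(\Sph) \ar[d]^-{\id \sma \textup{trc}} \\
TC(DS^1)^\wedge_p \ar[r]^-{c\alpha} & DS^1 \sma TC(\Sph)^\wedge_p
}
\]
directly, without splitting off $K^{\textup{red}}$ or invoking Dundas--Goodwillie--McCarthy (and since $DS^1$ is not connective, DGM would in any case require extra justification). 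Once the right vertical is rationally injective, the whole argument reduces to showing $\pi_{4i}^\Q\big(TC(DS^1)^\wedge_p\big)=0$.

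The gap is in how you establish that vanishing. You compute $\pi_{4i}^\Q(\bigvee_n E)=0$, but Theorem~\ref{thm:intro_tc_splitting} is an equivalence only \emph{after} $p$-completion, so the relevant group is $\pi_{4i}^\Q\big((\bigvee_n E)^\wedge_p\big)$. Knowing that $\pi_{4i}(\bigvee_n E)$ is torsion is not enough: $\Ext(\Z/p^\infty,-)$ of a $p$-torsion group is typically rationally large, and indeed the paper records $\pi_{2j+1}^\Q(E^\wedge_p)=\Ext(\Z/p^\infty,\oplus_k\Z/p^k)\otimes\Q\neq 0$. Your descent paragraph does not close this: there is no map out of the uncompleted $\bigvee_n E$ into the coassembly target (only out of its completion), so the ``image lies in torsion'' step has no source to stand on, and the Borel--Quillen--Kummer input speaks to $K(\Sph)$ rather than to the completion of $E$. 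The paper instead computes the \emph{integral} homology $H_{4i}(E)=0$ and $H_{4i-1}(E)=\bigoplus_{k\geq 0}\Z/p^k$ from the fiber sequence, invokes Arlettaz's stable Hurewicz theorem to identify $\pi_n(E)$ with $H_n(E)$ modulo prime-to-$p$ torsion in the range $n\leq 2p-6$ (\emph{this} is where the hypothesis $p\geq 2i+3$ enters), and then feeds these groups into the Bousfield short exact sequence for $p$-completion to obtain $\pi_{4i}(E^\wedge_p)=0$, since $\Hom(\Z/p^\infty,\oplus_k\Z/p^k)=0$. The same argument handles the infinite wedge, and the square finishes the proof.
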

This demonstrates that $K(DX)$ and $A(X)$ have markedly different behavior as $X$ varies. It gives a negative answer to Question \ref{q:dual_novikov}, although an affirmative answer may still be possible in the simply-connected case. We take this result as an indication that $TC(DX)$ will be a useful source of further clues as to the behavior of $K(DX)$.

We emphasize that the calculations in the second half of this paper require the full force of Theorem \ref{thm:intro_main}. Without an equivalence of $A_\infty$ rings, one can deduce the splitting in Corollary \ref{cor:intro_thh_splitting} nonequivariantly, but this is not powerful enough to draw conclusions about the topological cyclic homology.

The paper is organized as follows. In section \ref{sec:main_proof} we prove Theorem \ref{thm:intro_main}. In section \ref{sec:thh_splitting} we give the splitting of $THH(D\Sigma X)$, proving Corollary \ref{cor:intro_thh_splitting}. In section \ref{sec:review} we review the needed concepts in equivariant stable homotopy theory, and then calculate $TC(DS^1)$ in section \ref{sec:tc}, proving Theorem \ref{thm:intro_tc_splitting} and Corollary \ref{cor:intro_coassembly_zero}.

The author is pleased to acknowledge Andrew Blumberg, Ralph Cohen, and Randy McCarthy for several helpful conversations throughout this project, and Nick Kuhn for insightful comments on an earlier version of the paper. The author would also like to thank an anonymous referee for comments that substantially improved the exposition in the paper. This paper represents a part of the author's Ph.D. thesis, written under the direction of Ralph Cohen at Stanford University.

\section{Proof of Theorem \ref{thm:intro_main}.}\label{sec:main_proof}

We will work in the category of orthogonal spectra from \cite{mmss}. Let $X$ be a based CW complex, and recall our definitions
\[ D(\Sigma X) = F((\Sigma X)_+,\Sph), \qquad \ti D(\Sigma X) = F(\Sigma X,\Sph), \]
where $\Sph$ denotes the sphere spectrum. The obvious collapse maps from $(\Sigma X)_+$ into $S^0$ and $\Sigma X$ give an equivalence of spectra
\[ \Sph \vee \ti D(\Sigma X) \simar D(\Sigma X). \]
This becomes an equivalence of ring spectra when we endow the term $\ti D(\Sigma X) = F(\Sigma X,\Sph)$ with a multiplication by the dual of the diagonal map
\begin{equation}\label{eq:diagonal}
\Sigma X \ra \Sigma X \sma \Sigma X
\end{equation}
Our goal is to show that $\ti D(\Sigma X)$, with this multiplication, is equivalent as a non-unital $A_\infty$ ring spectrum to $\ti D(\Sigma X)$ with the zero multiplication.

It is elementary that the diagonal map \eqref{eq:diagonal} is nullhomotopic as a map of based spaces. For instance, we may model $S^n$ by $\R^n$ modulo the complement of the open cube $(0,1)^n$, and take the nullhomotopy which at time $t \in [0,1]$ is given by the formula
\[ \begin{array}{c}
S^1 \sma X \ra S^1 \sma S^1 \sma X \sma X \\
(s,x) \mapsto (s + t, s, x, x)
\end{array}
\]
\vspace{-1em}

\begin{figure}[H]
\centerline{\hspace{5em}\includegraphics[scale=1.3]{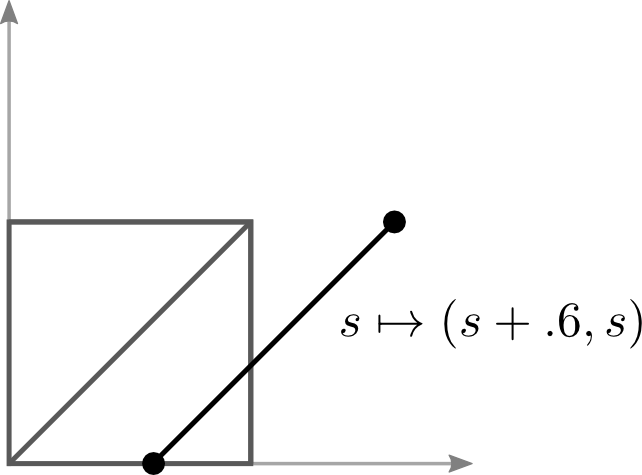}}
\caption{A nullhomotopy of the diagonal of the circle.}
\end{figure}\vspace{-2em}

In order to get an equivalence of ring spectra, it is necessary to add more structure, incorporating all compositions of these intermediate diagonal maps into a larger contractible space. To make this precise we use the theory of operads \cite{may_geom_iterated}. Consider a (non-symmetric) operad $\mc O$ whose $n$th space is
\[ \mc O(n) = (\R_{\geq 0})^{n-1} = \{ (t_1,\ldots,t_{n-1}) : t_i \geq 0 \} \]
To define the operad structure on $\mc O$ we think of $\mc O(n)$ as the subspace of $(\R_{\geq 0})^n$ whose last coordinate is zero.
Then the composition comes from adding the $t_i$ together:
\[ \mc O(k) \times \mc O(j_1) \times \ldots \times \mc O(j_k) \ra \mc O(j_1 + \ldots + j_k) \]
\[ (t_1,\ldots,t_k) \circ ((s^1_1,\ldots,s^1_{j_1}),\ldots,(s^k_1,\ldots,s^k_{j_k})) \mapsto (t_1 + s^1_1,\ldots,t_1 + s^1_{j_1},\ldots, t_k + s^k_1, \ldots t_k + s^k_{j_k}) \]
Clearly if $t_k = 0$ and $s^k_{j_k} = 0$ then $t_k + s^k_{j_k} = 0$, so our desired subspace is preserved under these maps.
% It is also clear that $\mc O$ is not symmetric, since our condition $t_k = 0$ is not preserved under the $\Sigma_k$-action.
It is straightforward to check that these maps are associative in the correct sense, so that $\mc O$ is an operad.

\begin{rmk}
$\mc O$ is almost an $A_\infty$ operad, except that $\mc O(0)  = \emptyset$ instead of being contractible. Therefore algebras $R$ over $\mc O$ are spectra with an $A_\infty$ multiplication but no unit map $\Sph \to R$.
%We omit the straightforward combinatorial argument that the functor $\Sph \vee -$ turns such spectra into $A_\infty$ ring spectra.
\end{rmk}

Now we define the action of $\mc O$ on $\ti D(\Sigma X)$. To handle the case of $X$ infinite, we allow ourselves to take a fibrant replacement $f\Sph$ of $\Sph$ as an associative ring spectrum, using the model structure of \cite[Thm 12.1(iv)]{mmss}. Now we let the point $(t_1,\ldots,t_{n-1})$ give the map
\begin{equation}\label{eq:o_action}
F(\Sigma X,f\Sph)^{\sma n} \ra F((\Sigma X)^{\sma n},(f\Sph)^{\sma n}) \ra F(\Sigma X,f\Sph)
\end{equation}
where the first leg is adjoint to a smash product of $n$ evaluation maps
\[ (\Sigma X)^{\sma n} \sma F(\Sigma X,f\Sph)^{\sma n} \ra (f\Sph)^{\sma n} \]
and the second leg is induced by the multiplication on the fibrant sphere $f\Sph$ and by the map of spaces
\[ \begin{array}{c}
S^1 \sma X \ra S^n \sma X^{\sma n} \\
(s,x) \mapsto (s + t_1, s + t_2, \ldots, s + t_{n-1}, s, x, x, \ldots, x)
\end{array}
\]
\vspace{-1em}

\begin{figure}[H]
\centering{\mbox{\hspace{5em}\includegraphics[scale=1.3]{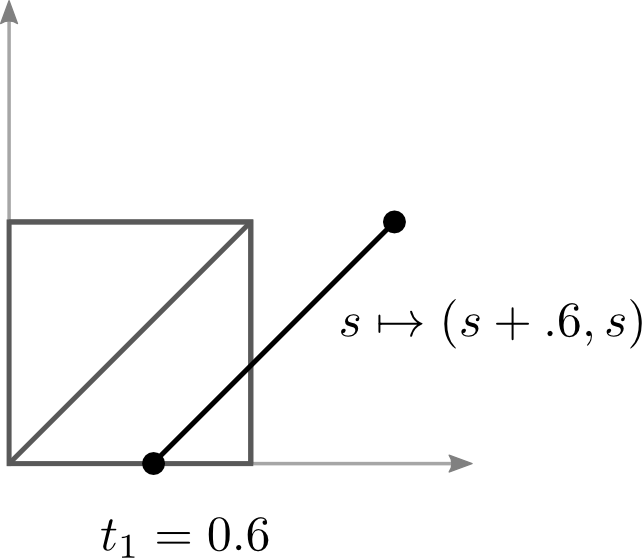}\includegraphics[scale=1.3]{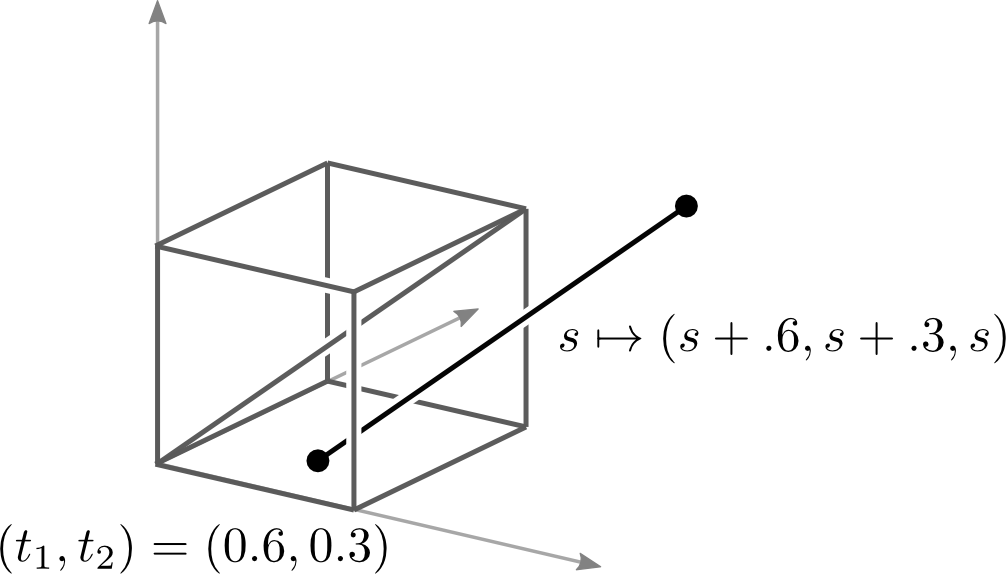}}}
\caption{The action of $\mc O(2)$ and $\mc O(3)$ on the dual of a suspension.}
\end{figure}\vspace{-2em}

As above, we are modeling $S^n$ by $\R^n$ modulo the complement of the open set $(0,1)^n$. It is straightforward to check that this respects the appropriate compositions, thereby making $\ti D(\Sigma X)$ into an $\mc O$-algebra. In fact, it is more natural to say that $\Sigma X$ is an ``$\mc O$-coalgebra,'' since our collection of maps
\[ \mc O(n)_+ \sma \Sigma X \ra (\Sigma X)^{\sma n} \]
are compatible with composition in a manner dual to that found in the definition of an $\mc O$-algebra.

For the penultimate step, we check that the following rules define two suboperads of $\mc O$:
\[ \begin{array}{ccl} \mc A(n) &=& \{(0,\ldots,0)\} \qquad \forall n \geq 1 \\
\\
\mc O'(1) &=& \{(0)\} \\
\mc O'(n) &=& \{(t_1,\ldots,t_{n-1}) : t_i \geq 1 \ \forall i \} \qquad \forall n \geq 2
\end{array} \]
Of course, $\mc A$ is the (non-unital) associative operad, acting on $F(\Sigma X,\Sph)$ by the usual multiplication given by the diagonal map.
On the other hand, $\mc O'(n)$ acts only by the zero multiplication when $n \geq 2$, sending everything to the basepoint.
So if we let $\mc Z$ be another copy of the non-unital associative operad, with $\mc Z(n)$ acting on $F(\Sigma X,\Sph)$ by the zero multiplication for $n \geq 2$, we get a commuting diagram of operads
\begin{equation}\label{eq:operads}
\xymatrix @C=-1em{
& \mbox{\hspace{4.5em}} && \mc End(\ti D(\Sigma X)) && \mbox{\hspace{5em}} & \\
\mc A \ar[rr]^-\sim \ar[rrru] \ar[rrrd]_-\sim & & \mc O \ar[ru] \ar[rd]^-\sim & & \mc O' \ar[rr]^-\sim \ar[ll]_-\sim \ar[lu] \ar[ld]_-\sim & & \mc Z \ar[lllu] \ar[llld]^-\sim \\
&&& \mc A &&& }
\end{equation}
This ends the intuitive phase of the proof; it remains to recall how such equivalences of operads lead to equivalences of algebras. Each operad $\mc C$ has an associated monad $C$, an endofunctor of spectra of the form
\[ C(Y) = \bigvee_{n \geq 0} \mc C(n)_+ \sma Y^{\sma n} \]
when $\mc C$ is a non-symmetric operad. The functor $C$ has a monoid structure in the form of natural transformations $\id \to C$ and $C \circ C \to C$, and this allows one to form a bar construction $B(D,C,Y)$ for any $\mc C$-algebra $Y$ and map of monads $C \to D$. The diagram of operads \eqref{eq:operads} then gives a zig-zag
\begin{equation}\label{eq:bar_constructions}
\xymatrix{
\ti D(\Sigma X) & B(\mc A,\mc A,\ti D(\Sigma X)) \ar[l]_-\sim \ar[r]^-\sim & B(\mc A,\mc O,\ti D(\Sigma X)) \\
\ti D(\Sigma X)_{\textup{zero mult}} & B(\mc A,\mc Z,\ti D(\Sigma X)) \ar[l]_-\sim & B(\mc A,\mc O',\ti D(\Sigma X)) \ar[u]_-\sim \ar[l]_-\sim }
\end{equation}
The maps are all equivalences of $\mc A$-algebras (i.e. non-unital associative rings) by standard properties of the bar construction \cite[\S 9]{may_geom_iterated}, so long as we make a few careful cofibrant replacements. There are a few ways to do this, and we describe just one possible method. We first observe that associative ring spectra form a cofibrantly generated model category, and so once we have a zig-zag of simplicial associative rings, we can always replace it by a level-equivalent zig-zag of Reedy cofibrant simplicial associative rings.
It remains to check that our maps of simplicial objects are equivalences on each simplicial level. Expanding out the definition of $A(C(\ldots(C(Y))\ldots))$ for any of our monads $C$, we get a wedge sum in which each summand is a smash product of a CW complex and a spectrum of the form $F(\Sigma X,f\Sph)^{\sma n}$. We may replace each such term with $F((\Sigma X)^{\sma n},f\Sph)$, and the relevant operad structure still exists, because it was defined through a coalgebra structure on $\Sigma X$. 
% The degeneracies include summands and the face maps are built from maps of the form
% \[ \mc O(i)_+ \sma F(\Sigma X,f\Sph)^{\sma (i+j)} \ra F(\Sigma X,f\Sph)^{\sma (1+j)} \]
%%% \ra \mc O(i)_+ F((\Sigma X)^{\sma i},(f\Sph)^{\sma i}) \sma F(\Sigma X,f\Sph)^{\sma j}
% using the action from \eqref{o_action}. 
% We replace the spectra $F(\Sigma X,f\Sph)^{\sma n}$ with $F((\Sigma X)^{\sma n},(f\Sph)^{\sma n})$, and define a new simplicial structure in which the face maps are built from maps of the form
% \[ \mc O(i)_+ \sma F((\Sigma X)^{\sma (i+j)},(f\Sph)^{\sma (i+j)}) \ra F((\Sigma X)^{\sma (1+j)},(f\Sph)^{\sma (1+j)}) \]
% by the dual of the same maps we described above:
% \[ \begin{array}{c}
% S^1 \sma X \ra S^i \sma X^{\sma i} \\
% (s,x) \mapsto (s + t_1, s + t_2, \ldots, s + t_{i-1}, s, x, x, \ldots, x)
% \end{array}
% \]
This replacement gives a similar zig-zag of simplicial ring spectra, whose levels now have the homotopy type that we expect.

Once we are assured that the maps of \eqref{eq:bar_constructions} are equivalences of non-unital associative rings, we apply $\Sph \vee -$ to get equivalences of augmented associative rings. This finishes our proof of Theorem \ref{thm:intro_main}.

\section{Topological Hochschild homology of $D(\Sigma X)$.}\label{sec:thh_splitting}

We briefly recall that the topological Hochschild homology of an ring spectrum $R$ is the geometric realization of the simplicial spectrum whose $(n-1)$st level is $R^{\sma n}$, the smash product of $n$ copies of $R$. In fact, this produces not just a simplicial object but a cyclic object in spectra (cf. \cite{dhk,jones1987cyclic,bhm,madsen_survey}). The realization is a genuine $S^1$-spectrum with ``cyclotomic'' structure, meaning there are compatible equivalences of geometric fixed point spectra
\begin{equation}\label{eq:cyclotomic}
c_m: \Phi^{C_m} THH(R) \simeq THH(R)
\end{equation}
for every finite subgroup $C_m \cong \Z/m\Z$ of the circle group $S^1$ \cite{bokstedt1985topological}. The choice of model does not matter for our purposes, but for definiteness we use the norm model for $THH$ \cite{angeltveit2014relative,part1}, which produces an orthogonal $S^1$-spectrum \cite{mandell2002equivariant}. We tacitly assume that $R$ is made cofibrant as an orthogonal spectrum when we apply $THH$; this is possible by \cite[12.1.(iv)-(v)]{mmss}. Then all the aforementioned structure on $THH$ is an invariant of $R$ as an $A_\infty$ ring spectrum \cite{angeltveit2014relative,part1}.

Now suppose that $R$ is a square-zero extension of $\Sph$ by a cofibrant orthogonal spectrum $M$. We indicate a simple proof of the splitting of $THH(\Sph \vee M)$ in the norm model, analogous to the splitting that occurs in the B\" okstedt model \cite[V.3.2]{dundas2012local}.
\begin{prop}\label{prop:THH_of_square_zero_extension}
There is an isomorphism of genuine orthogonal $S^1$-spectra
\[ THH(\Sph \vee M) \simeq \Sph \vee \bigvee_{n=1}^\infty (S^1_+ \sma S^{\overline{\rho_{C_n}}}) \sma_{C_n} M^{\sma n} \]
where $\overline{\rho_{C_n}}$ denotes the reduced regular representation.
\end{prop}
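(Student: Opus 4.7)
The plan is to decompose the cyclic bar construction of $\Sph \vee M$ levelwise using distributivity, in parallel with the B\"okstedt-model argument of \cite[V.3.2]{dundas2012local}, and then verify that the decomposition is compatible with the genuine $S^1$-equivariant structure of the norm model. At simplicial level $k$, distributivity gives
\[
(\Sph \vee M)^{\sma(k+1)} \;\cong\; \bigvee_{S \subseteq \{0,\ldots,k\}} M^{\sma S},
\]
where the $i$-th slot is $M$ when $i \in S$ and $\Sph$ otherwise. Cyclic rotation permutes $S$ and degeneracies insert $\Sph$, so both preserve $|S|$; each face $d_i$ multiplies two consecutive factors, and since $M \sma M \to M$ is the zero map in a square-zero extension, $d_i$ annihilates any summand with $\{i,i+1\} \subseteq S$, while in every other case $|S|$ is preserved. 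Consequently the cyclic spectrum splits as $\Sph \vee \bigvee_{n \geq 1} X^{(n)}_\bullet$ of sub-cyclic spectra indexed by $n = |S|$, with the $n=0$ piece contributing the constant $\Sph$.

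For each $n \geq 1$, the cyclic spectrum $X^{(n)}_\bullet$ has the form $\mathcal N^{(n)}_\bullet \sma M^{\sma n}$, where $\mathcal N^{(n)}_\bullet$ is the cyclic set whose $k$-simplices are size-$n$ subsets of a cyclically ordered set of size $k+1$ with no two adjacent elements, with face/degeneracy/cyclic structure inherited from the calculation above. The group $C_n$ acts on $M^{\sma n}$ by cyclic permutation and compatibly on $\mathcal N^{(n)}_\bullet$ by rotating the labels of the $n$ beads. I would identify the geometric realization $|\mathcal N^{(n)}_\bullet|$, as a genuine $S^1$-space, with $S^1_+ \sma_{C_n} S^{\overline{\rho_{C_n}}}$; coupled with the $C_n$-diagonal on $M^{\sma n}$, this yields the asserted wedge summand $(S^1_+ \sma S^{\overline{\rho_{C_n}}}) \sma_{C_n} M^{\sma n}$.

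The main obstacle is precisely this last identification as a genuine $S^1$-spectrum: the nonequivariant homotopy type is a routine simplicial calculation, but the representation-sphere structure and fixed-point behavior under each $C_m \subseteq S^1$ must be matched exactly. I plan to carry this out by edgewise subdivision of $\mathcal N^{(n)}_\bullet$: the $n$-fold subdivision makes the $C_n$-action free on the bead-labels, so that the quotient is a familiar simplicial model whose realization exhibits the reduced regular representation sphere $S^{\overline{\rho_{C_n}}}$ fibered over $S^1/C_n \cong S^1$. Checking the $C_m$-fixed points for each $m \mid n$ then reproduces $S^{\overline{\rho_{C_{n/m}}}}$ as demanded by the representation-sphere formalism, and the cofibrancy hypothesis on $M$ ensures that smashing with $M^{\sma n}$ preserves the resulting equivalences. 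Assembling the pieces over all $n$ yields the claimed isomorphism of genuine orthogonal $S^1$-spectra.
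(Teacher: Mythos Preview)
Your decomposition of the cyclic object by $|S|=n$ is correct and is exactly how the paper begins. However, your description of $\mathcal N^{(n)}_\bullet$ contains a genuine error: the restriction to subsets with ``no two adjacent elements'' is wrong. That $d_i$ annihilates a summand with $\{i,i+1\}\subseteq S$ does not remove that summand as a simplex---it remains a $k$-simplex, merely one whose $i$th face is the basepoint. With your restriction, $\mathcal N^{(n)}$ would have no non-basepoint $(n-1)$-simplices at all (the unique size-$n$ subset of a cyclic $n$-set has every pair of consecutive elements adjacent), so its realization would be trivial in the crucial dimension. Correctly, the $k$-simplices of $\mathcal N^{(n)}$ are \emph{all} size-$n$ subsets, plus basepoint.

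Even with that fixed, the route through a separate cyclic set and edgewise subdivision is more delicate than you indicate: the factorization $X^{(n)}_\bullet \cong \mathcal N^{(n)}_\bullet \sma M^{\sma n}$ requires an extra $C_n$-action on $\mathcal N^{(n)}$ intertwining with the cyclic structure, and your stated identification $|\mathcal N^{(n)}| \simeq S^1_+ \sma_{C_n} S^{\overline{\rho_{C_n}}}$ has already quotiented by $C_n$, leaving nothing to balance against the permutation action on $M^{\sma n}$. The paper avoids all of this with a single observation: $X^{(n)}_\bullet$ is trivial below level $n-1$, equals $M^{\sma n}$ at level $n-1$, and every size-$n$ subset at higher level omits some index and hence lies in the image of a degeneracy, so the cyclic latching maps are isomorphisms above level $n-1$. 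The one nontrivial cyclic latching pushout then gives
\[
|X^{(n)}_\bullet| \;\cong\; (\Lambda^{n-1}/\partial\Lambda^{n-1}) \sma_{C_n} M^{\sma n},
\]
and the standard $C_n$-equivariant homeomorphism $\Lambda^{n-1}\cong \Delta^{n-1}\times S^1$ (rotating vertices and shifting the circle by $1/n$) identifies the left factor with $S^{\overline{\rho_{C_n}}}\sma S^1_+$. This produces the proposition as a point-set isomorphism of orthogonal $S^1$-spectra, with no subdivision and no separate fixed-point verification.
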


\begin{proof}
The relevant cyclic spectrum is at level $k-1$
\[ (\Sph \vee M)^{\sma k} \cong \bigvee_{S \subset \{1,\ldots,k\}} M^{\sma |S|} \]
This splits as a cyclic object, with one summand for each $n \geq 0$, consisting of all the terms with $|S| = n$. When $n = 0$ we get the constant simplicial object on $\Sph$ and so the realization is $\Sph$. For each $n > 0$, we analyze the resulting cyclic spectrum $X_\bullet$ using the language of \cite[\S 2.2]{part1}. The cyclic latching maps are all isomorphisms above level $n-1$, so the realization is attained at the cyclic $(n-1)$-skeleton. Since $X_k = *$ when $k < n-1$, the cyclic latching square simplifies to the pushout square
\[ \xymatrix{
X_{n-1} \sma_{C_n} \partial\Lambda^{n-1}_+ \ar[r] \ar[d] & X_{n-1} \sma_{C_n} \Lambda^{n-1}_+ \ar[d] \\
{*} \ar[r] & |\sk_{n-1}^\cyc X_\bullet| \cong |X_\bullet| } \]
where $\Lambda^{n-1} \cong \Delta^{n-1} \times S^1$ is the standard cyclic $n$-simplex. Since $X_{n-1} = M^{\sma n}$ we conclude the realization is
\[ (\Lambda^{n-1}/\partial \Lambda^{n-1}) \sma_{C_n} M^{\sma n}. \]
The action of the cyclic group $C_n$ on $\Lambda^{n-1}$ is by the cycle map in the category $\mathbf \Lambda$. There is a choice of homeomorphism $\Lambda^{n-1} \cong \Delta^{n-1} \times S^1$ under which the $C_n$-action rotates the vertices of $\Delta^{n-1}$ and shifts the circle by $\frac1{n}$ of a full rotation \cite[p.200]{madsen_survey}, and this gives the proposition.

\end{proof}

\begin{rmk}
Although we will not use this fact, we can observe that the simplicial filtration of $(\Lambda^{n-1}/\partial \Lambda^{n-1}) \sma_{C_n} M^{\sma n}$ is only nontrivial at levels $(n-1)$ and $n$. Therefore the B{\"o}kstedt spectral sequence for $H_*(THH(\Sph \vee M))$ with field coefficients always collapses at the $E^2$ page.
% When $X$ is finite and connected, the dual $F(THH(D(\Sigma X)),\Sph)$ is equivalent to the free loop space $\Sigma^\infty_+ L\Sigma X$ \cite{part1}. One can show that the dual of our splitting becomes the splitting from \cite{cohen1987model}, see \cite[3.5.2]{malkiewich2014duality} for a sketch.
% \[ \Sigma^\infty L\Sigma X \simeq \bigvee_{n=1}^\infty \Sigma^\infty S^1_+ \sma_{C_n} X^{\sma n} \]
\end{rmk}

% We may combine Proposition \ref{prop:THH_of_square_zero_extension} with Theorem \ref{thm:intro_main} and simplify further to get Corollary \ref{cor:intro_thh_splitting} from the introduction:
\begin{cor}\label{cor:thh_splitting}
There is an equivalence of genuine $S^1$-spectra
\[ THH(D \Sigma X) \simeq \Sph \vee \Sigma^{-1}\left(\bigvee_{n=1}^\infty S^1_+ \sma_{C_n} (c\ti DX)^{\sma n} \right) \]
where the $c$ denotes cofibrant replacement as an orthogonal spectrum.
\end{cor}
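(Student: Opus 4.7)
The plan is to combine Theorem \ref{thm:intro_main} with Proposition \ref{prop:THH_of_square_zero_extension}, and then perform a $C_n$-equivariant identification that cancels the regular representation against its reduced analogue.

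First, I invoke Theorem \ref{thm:intro_main} to replace $D(\Sigma X)$ with $\Sph \vee \ti D(\Sigma X)$, endowed with the zero multiplication on $\ti D(\Sigma X)$, as an augmented $A_\infty$ ring spectrum. Since $THH$ is an invariant of the augmented $A_\infty$ structure as a genuine cyclotomic $S^1$-spectrum (as recalled above, using the norm model and cofibrant replacements as in \cite{mmss}), this produces an equivalence of genuine $S^1$-spectra
\[ THH(D\Sigma X) \simeq THH(\Sph \vee c\ti D(\Sigma X)), \]
where $c$ is cofibrant replacement of orthogonal spectra. Applying Proposition \ref{prop:THH_of_square_zero_extension} with $M = c\ti D(\Sigma X)$ gives
\[ THH(\Sph \vee c\ti D(\Sigma X)) \simeq \Sph \vee \bigvee_{n=1}^\infty (S^1_+ \sma S^{\overline{\rho_{C_n}}}) \sma_{C_n} (c\ti D(\Sigma X))^{\sma n}. \]

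Next I identify $(c\ti D(\Sigma X))^{\sma n}$ as a $C_n$-spectrum. Since $X$ is a based CW complex, the natural map
\[ F(\Sigma X, \Sph)^{\sma n} \ra F((\Sigma X)^{\sma n}, \Sph) \]
is a $C_n$-equivariant equivalence after cofibrant replacement, where $C_n$ acts by simultaneous cyclic permutation. Writing $(\Sigma X)^{\sma n} \cong S^n \sma X^{\sma n}$ and recognizing that $C_n$ acts on the $n$ suspension coordinates by coordinate permutation, we have $(\Sigma X)^{\sma n} \cong S^{\rho_{C_n}} \sma X^{\sma n}$ as $C_n$-spaces, where $\rho_{C_n}$ is the regular (=permutation) representation. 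Hence there is a $C_n$-equivariant equivalence
\[ (c\ti D(\Sigma X))^{\sma n} \simeq S^{-\rho_{C_n}} \sma (c\ti D X)^{\sma n}. \]

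Finally I combine these. Using $\rho_{C_n} \cong \R \oplus \overline{\rho_{C_n}}$, we get the $C_n$-equivariant cancellation $S^{\overline{\rho_{C_n}}} \sma S^{-\rho_{C_n}} \simeq S^{-1}$ (with trivial $C_n$-action), so
\[ (S^1_+ \sma S^{\overline{\rho_{C_n}}}) \sma_{C_n} (c\ti D(\Sigma X))^{\sma n} \simeq S^1_+ \sma S^{-1} \sma_{C_n} (c\ti D X)^{\sma n} = \Sigma^{-1}\bigl(S^1_+ \sma_{C_n} (c\ti DX)^{\sma n}\bigr). \]
Taking the wedge over $n \geq 1$ and moving $\Sigma^{-1}$ outside yields the claimed formula.

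The main obstacle is the $C_n$-equivariant identification in the second step: one must be careful that the cofibrant replacements are compatible with the $C_n$-action used by the norm model of $THH$, and that the identification of $S^n$ with $S^{\rho_{C_n}}$ uses the correct permutation of suspension coordinates matching the cyclic structure on the simplicial spectrum. Everything else is a formal manipulation of representation spheres and colimits.
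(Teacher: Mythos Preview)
Your proposal follows the same strategy as the paper's proof: invoke Theorem \ref{thm:intro_main}, apply Proposition \ref{prop:THH_of_square_zero_extension}, and then cancel $S^{\overline{\rho_{C_n}}}$ against the $S^{-\rho_{C_n}}$ coming from the extra suspension. The only substantive difference is in how the genuine $C_n$-equivalence $(c\ti D\Sigma X)^{\sma n} \simeq S^{-\rho_{C_n}} \sma (c\ti DX)^{\sma n}$ is justified: rather than passing through function spectra (which, as you note, is exactly where care is needed), the paper fixes an explicit nonequivariant equivalence $e\colon F_1 S^0 \sma c\ti DX \to c\ti D\Sigma X$, observes that $(F_1 S^0)^{\sma n} \cong F_{\rho_{C_n}} S^0$, and then invokes \cite[B.96]{hhr} to see that both $e^{\sma n}$ and the map $S^{\overline{\rho_{C_n}}} \sma F_{\rho_{C_n}} S^0 \to F_1 S^0$ are equivalences on all geometric fixed points---hence genuine $C_n$-equivalences between cofibrant $C_n$-spectra, preserved by the left Quillen functor $S^1_+ \sma_{C_n} -$.
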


\begin{proof}
We replace $D(\Sigma X)$ by the cofibrant square-zero extension $\Sph \vee c\ti{D}(\Sigma X)$. Since they are equivalent as $A_\infty$ rings, this does not change the topological Hochschild homology up to equivalence. The above splitting rearranges to
\[ THH(\Sph \vee c\ti D(\Sigma X)) \cong \Sph \vee \bigvee_{n=1}^\infty S^1_+ \sma_{C_n} (S^{\overline\rho_{C_n}} \sma (c\ti D\Sigma X)^{\sma n}) \]
We arrange our cofibrant models of $\ti DX$ and $\ti D\Sigma X$ to be equipped with an equivalence
\[ \xymatrix{ F_1 S^0 \sma c\ti DX \ar[r]^-{e}_-\sim & c\ti D\Sigma X } \]
where $F_1 S^0$ is the model for $\Sigma^{-1}\Sph$ from \cite[1.3]{mmss}. The $n$-fold smash power of $e$ gives maps of orthogonal $C_n$-spectra
\[ \xymatrix{ F_1 S^0 \sma (c\ti DX)^{\sma n} &
S^{\overline\rho_{C_n}} \sma F_{\rho_{C_n}} S^0 \sma c(\ti DX)^{\sma n} \ar[l]_-\sim \ar[r]^-{\id \sma e^{\sma n}}_-\sim &
S^{\overline\rho_{C_n}} \sma (c\ti D\Sigma X)^{\sma n} } \]
These maps are equivalences on all the geometric fixed points \cite[B.96]{hhr} and so they are genuine $C_n$-equivalences. The spectra are cofibrant in the model category of $C_n$-spectra \cite[\S III.4]{mandell2002equivariant} and so the left Quillen functor $S^1_+ \sma_{C_n} -$ preserves these equivalences.
\end{proof}

\begin{rmk}\label{rem:cm}
In the norm model of $THH$, the equivalences $c_m$ in \eqref{eq:cyclotomic} come from homeomorphisms of orthogonal spectra \cite{angeltveit2014relative,part1}. This allows us to easily identify $c_m$ for $R = \Sph \vee M$ as the map taking the $C_m$-geometric fixed points of the $(mn)$th summand to the $n$th summand by an equivalence. The same conclusion can be drawn from Bokstedt's model as well, although one has to work a little more.
\end{rmk}

\section{Review of equivariant stable homotopy and transfers.}\label{sec:review}

Our final task is to compute the topological cyclic homology of the dual circle, and in this section we recall the necessary preliminaries. We first recall that for each covering space $f: E \to B$, one may use a Pontryagin-Thom collapse to define a transfer map 
\[ \tr_f: \Sigma^\infty_+ B \to \Sigma^\infty_+ E. \]
When $E \to B$ is instead a principal $S^1$ bundle the same construction gives a dimension-shifting transfer
\[ \tr_f: \Sigma \Sigma^\infty_+ B \to \Sigma^\infty_+ E. \]
(cf. \cite[IV.2.13]{lms}) We drop the subscript $f$ when it is understood. As a special case, let $G$ be a topological group and $X$ an unbased $G$-space. If $G$ is discrete and finite then $X_{hH} \to X_{hG}$ is a covering map for every $H \leq G$, giving a transfer
\[ \tr^G_H : \Sigma^\infty_+ X_{hG} \ra \Sigma^\infty_+ X_{hH}. \]
When $G = S^1$ and $H = C_n$ is a proper subgroup, then $X_{hH} \to X_{hG}$ is an $S^1/C_n$-bundle, and so the circle transfer is a map
\[ \tr^{S^1}_{C_n}: \Sigma \Sigma^\infty_+ X_{hS^1} \ra \Sigma^\infty_+ X_{hC_n}. \]
Transfers in general are compatible with composition. For $C_{p^n} \to C_{p^{n+1}} \to S^1$, this means the circle transfer gives a map into the inverse limit
\begin{equation}\label{eq:inverse_lim_of_transfers}
\Sigma \Sigma^\infty_+ X_{hS^1} \ra \underset{n}\holim\, \Sigma^\infty_+ X_{hC_{p^n}}.
\end{equation}
We will use the following specialization of \cite[4.4.9]{madsen_survey}.
\begin{lem}\label{lem:inverse_lim_of_transfers}
The map \eqref{eq:inverse_lim_of_transfers} is an equivalence after completion at $p$.
\end{lem}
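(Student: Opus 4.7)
The plan is to reduce by equivariant cellular induction on $X$ to the case $X = S^1/H$ for a closed subgroup $H \leq S^1$, and then appeal to classical computations. Both the source and target of \eqref{eq:inverse_lim_of_transfers} are homotopical functors of the $S^1$-space $X$: they preserve $S^1$-equivalences, convert equivariant cofiber sequences of finite-type $S^1$-CW complexes into cofiber sequences of spectra (the target using a Milnor $\varprojlim^1$ argument), and commute with suspension. The circle transfer is natural in $X$, so \eqref{eq:inverse_lim_of_transfers} is a natural transformation between these functors, and it suffices to check it is a $p$-completion equivalence on a generating family of spaces.

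By induction on the equivariant cells $S^1/H \times D^k$ of $X$, and by suspending away the $D^k/\partial D^k$ factor, the problem reduces to $X = S^1/H$, where $H$ is $\{1\}$, a finite cyclic group $C_m$, or $S^1$ itself. When $H = \{1\}$, one has $X_{hS^1} \simeq *$ while $X_{hC_{p^n}} \simeq S^1/C_{p^n} \simeq S^1$, and the tower transitions are the transfers of the $p$-fold self-covers $S^1 \to S^1$; using $\Sigma^\infty_+ S^1 \simeq \Sph \vee \Sigma\Sph$, a direct computation shows the holim is $p$-adically contractible, matching the vanishing of the source. The case $H = C_m$ with $m = p^a m'$, $(m',p)=1$, reduces to the remaining cases since $BC_{m'}$ is $p$-locally contractible and the factor $C_{p^a}$ is absorbed into the tower of $C_{p^n}$.

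The essential case is $H = S^1$, where \eqref{eq:inverse_lim_of_transfers} becomes the claim that the circle transfers
\[ \Sigma \Sigma^\infty_+ BS^1 \ra \holim_n \Sigma^\infty_+ BC_{p^n} \]
assemble into a $p$-completion equivalence, with transitions in the tower being the transfers of the finite $p$-fold covers $BC_{p^n} \to BC_{p^{n+1}}$. This is the form in which the result is stated as \cite[4.4.9]{madsen_survey}, and it is the main obstacle: it is not formal, but a genuine equivariant-stable computation, typically proved by identifying the target with a $p$-completed Borel construction on $BS^1$ and then analyzing the cofiber sequence produced by the norm/restriction maps in $S^1$-equivariant stable homotopy theory (closely related to the classical Segal conjecture for $C_p$). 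Once this base case is in hand, the cellular induction of the previous paragraph assembles the general statement for arbitrary $X$.
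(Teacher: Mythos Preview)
The paper does not prove this lemma at all; it simply records it as ``the following specialization of \cite[4.4.9]{madsen_survey}'' and moves on. So there is nothing to compare your argument to on the paper's side beyond that citation.

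Your reduction strategy is reasonable in outline, but the case $H=\{1\}$ is miscomputed. With $X=S^1$ acting on itself one has $X_{hS^1}\simeq *$, but then the source of \eqref{eq:inverse_lim_of_transfers} is $\Sigma\Sigma^\infty_+(*)=\Sigma\Sph$, not zero. Likewise the target is not $p$-adically contractible: under $\Sigma^\infty_+ S^1\simeq \Sph\vee\Sigma\Sph$ the transfer of the $p$-fold self-cover acts by multiplication by $p$ on the $\Sph$ summand and by the identity on the $\Sigma\Sph$ summand, so the homotopy limit is $p$-adically $\Sigma\Sph$, and the circle transfer hits exactly that top cell. The conclusion for this orbit type is still correct, but your stated reason (``vanishing of the source'') is wrong. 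The case $H=C_m$ is also sketchier than it looks once you unpack the stabilizers.

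More to the point, your argument ultimately appeals to \cite[4.4.9]{madsen_survey} for the orbit $S^1/S^1$, which is the same reference the paper invokes for the lemma outright. Since Madsen's statement already covers general $X$ (and the paper in fact applies the lemma to the non-finite $X=\coprod_{n\geq 1} S^1_{C_n}$, where your finite-type cellular induction would need additional care with the inverse limit), the reduction does not buy anything over the direct citation.
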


Next we recall some standard facts concerning the equivariant stable homotopy category \cite{lms}. We are interested in the case of $G$ a finite cyclic group, but we state the results for $G$ finite abelian. We recall that the homotopy category of spectra $\mc S$ has an analog $\mc S^G$ of genuinely $G$-equivariant spectra. A modern construction of $\mc S^G$ goes by inverting the $\pi_*$-isomorphisms in the category of orthogonal $G$-spectra \cite[III.3]{mandell2002equivariant}. For every inclusion of groups $H \leq G$ there is a forgetful change of groups functor $\mc S^G \to \mc S^H$, and for simplicity we omit this from the notation.

We say that an equivariant equivalence of $G$-spaces $X \to Y$ is a map inducing weak equivalences on the fixed points $X^H \to Y^H$ for all $H \leq G$. Every based $G$-cell complex $X$ has a suspension spectrum $\Sigma^\infty X$, and this construction sends equivariant equivalences of spaces to equivalences of $G$-spectra.

The category of $G$-spectra is equipped with two fixed-point functors, the \emph{genuine fixed points} and the \emph{geometric fixed points}
\[ (-)^H: \mc S^G \to \mc S^{G/H} \qquad \Phi^H(-): \mc S^G \to \mc S^{G/H}. \]
% lms II.9.7 defines geometric fixed points
There are canonical isomorphisms $E^G \cong (E^H)^{G/H}$ for which we do not introduce additional notation.
Each of these functors measures weak equivalences, in the sense that a map of $G$-spectra $E \to E'$ is an equivalence iff it gives an equivalence on genuine fixed point spectra for all $H \leq G$, and similarly with the geometric fixed points. When $H \leq G$ there is a natural inclusion of fixed points map
\begin{equation}\label{eq:frobenius}
F: E^G \ra E^H
\end{equation}
for $G$-spectra $E$. In general there is no such map for the geometric fixed points, but on suspension spectra we have natural equivalences
\[ \Phi^H(\Sigma^\infty X) \simeq \Sigma^\infty X^H \]
and so one may simply include the fixed point spaces
\[ \Sigma^\infty \iota: \Sigma^\infty X^G \ra \Sigma^\infty X^H. \]
These two notions of fixed points are connected by a ``restriction'' natural transformation, giving for each $G$-spectrum $E$ a map of $G/H$ spectra
\begin{equation}\label{eq:restriction}
r^H: E^H \to \Phi^H E.
\end{equation}
When $E$ is a suspension spectrum $\Sigma^\infty_+ X$, the maps $r^H$ are split by natural inclusion maps \cite[II.3.14.(i)]{lms}
\[ s^H: \Sigma^\infty (X^H) \to (\Sigma^\infty X)^H. \]
A quick inspection in the category of orthogonal spectra gives
\begin{lem}\label{lem:frobenius_inclusion}
We have the commuting square in the homotopy category
\[ \xymatrix{
\Sigma^\infty (X^G) \ar[r]^-{s^G} \ar[d]^-{\Sigma^\infty \iota} & (\Sigma^\infty X)^G \ar[d]^-F \\
\Sigma^\infty (X^H) \ar[r]^-{s^H} & (\Sigma^\infty X)^H
} \]
\end{lem}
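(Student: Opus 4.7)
I would work directly in the category of orthogonal $G$-spectra (as in \cite{mandell2002equivariant}) and show that the square commutes strictly on the point-set level, before passing to the homotopy category. The idea is that all four maps are built from canonical inclusions of fixed-point subspaces, so the equality in the square is essentially tautological.

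First I would fix explicit models. For a based $G$-CW complex $X$ and a subgroup $H \leq G$, the orthogonal suspension spectrum $\Sigma^\infty X$ has $V$-th space $X \sma S^V$, and its categorical $H$-fixed point $G/H$-spectrum has $V^H$-th space $(X \sma S^V)^H$. In this description, $s^H$ is the inclusion $X^H \sma S^{V^H} \hookrightarrow (X \sma S^V)^H$ induced by the two factorwise fixed-point inclusions; the Frobenius $F$ is literally the subspace inclusion $(X \sma S^V)^G \hookrightarrow (X \sma S^V)^H$; and $\Sigma^\infty \iota$ is $\iota \sma \id_{S^V}$ at each level. Each of these agrees with the corresponding construction in \cite[II.3.14.(i)]{lms} after the standard comparison.

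Next I would check commutation at a fixed level $V$. The top-then-right path takes $X^G \sma S^{V^G}$ into $(X \sma S^V)^G$ via $s^G$, then into $(X \sma S^V)^H$ via $F$; the left-then-bottom path takes the same source first into $X^H \sma S^{V^H}$ via $\iota \sma (S^{V^G} \hookrightarrow S^{V^H})$, and then into $(X \sma S^V)^H$ via $s^H$. Both factorizations are manifestly the canonical inclusion $X^G \sma S^{V^G} \hookrightarrow (X \sma S^V)^H$, so the square commutes on the nose in orthogonal $G$-spectra.

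Finally, since $X$ is a $G$-CW complex, $\Sigma^\infty X$ is cofibrant as a $G$-spectrum, and categorical fixed points of a cofibrant orthogonal $G$-spectrum compute the derived genuine fixed points. Hence strict commutation in orthogonal spectra descends to commutation in $\mc S^G$. The only point of care will be verifying that the point-set models of $s^H$ and $F$ described above are really representatives of the homotopy-category maps that appear in the statement, but this is a direct unpacking of the definitions in \cite[III.3]{mandell2002equivariant} rather than any genuine obstacle.
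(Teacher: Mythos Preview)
Your proposal is correct and is essentially the same as the paper's argument: the paper simply introduces the lemma with the phrase ``A quick inspection in the category of orthogonal spectra gives'' and states it without further proof, and what you have written is exactly such an inspection. Your point-set description of the categorical fixed points could be tightened (in the Mandell--May model the $G/H$-spectrum $(\Sigma^\infty X)^H$ is indexed on $G/H$-representations $W$ with value $(X\sma S^W)^H = X^H\sma S^W$, rather than on spaces of the form $(X\sma S^V)^H$ for arbitrary $G$-representations $V$), but this does not affect the argument: all four maps are still induced by canonical inclusions of fixed-point subspaces, and the square commutes on the nose before passing to the homotopy category.
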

\noindent
The transfer $\tr^H$ has a variant which gives a map of $G/H$ spectra
\[ \ti\tr^H: \Sigma^\infty_+ X_{hH} \ra (\Sigma^\infty_+ X)^H. \]
The composition of $\ti\tr^H$ with $F: (\Sigma^\infty_+ X)^H \to \Sigma^\infty_+ X$ is precisely $\tr^H$. We also recall the following more general statement from \cite[(4.1.6)]{madsen_survey}.
\begin{lem}\label{lem:frobenius_transfer}
The following commutes in the homotopy category of spectra:
\[ \xymatrix{
\Sigma^\infty X_{hG} \ar[r]^-{\ti\tr^G} \ar[d]^-{\tr^G_H} & (\Sigma^\infty X)^G \ar[d]^-F \\
\Sigma^\infty X_{hH} \ar[r]^-{\ti\tr^H} & (\Sigma^\infty X)^H
} \]
\end{lem}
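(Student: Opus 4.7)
The plan is to realize both horizontal maps as $G$- and $H$-fixed points of a single equivariant map of $G$-spectra, after which the square becomes naturality of the Frobenius. I take $E := EG_+ \wedge \Sigma^\infty_+ X$ with diagonal $G$-action. Because $G$ acts freely on $EG$, $E$ is a free $G$-CW spectrum, and the Adams isomorphism (which carries no Lie-algebra correction for finite $G$) supplies a natural equivalence
\[ \alpha_K : \Sigma^\infty_+ X_{hK} = E/K \simar E^K \]
for every subgroup $K \leq G$. A standard consequence is that, under $\alpha_G$ and $\alpha_H$, the inclusion-of-fixed-points Frobenius $F : E^G \to E^H$ is identified with the classical covering transfer $\tr^G_H : \Sigma^\infty_+ X_{hG} \to \Sigma^\infty_+ X_{hH}$ of the $G/H$-cover $X_{hH} \to X_{hG}$; this is the usual Adams-theoretic provenance of covering transfers.

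Next, the $G$-equivariant map $p : E \to \Sigma^\infty_+ X$ induced by the projection $EG_+ \to S^0$ gives, upon applying $(-)^G$ and $(-)^H$, two horizontal maps fitting into the commuting naturality square
\[ \xymatrix{
E^G \ar[r]^-{p^G} \ar[d]^-F & (\Sigma^\infty_+ X)^G \ar[d]^-F \\
E^H \ar[r]^-{p^H} & (\Sigma^\infty_+ X)^H
} \]
Precomposing the horizontals with $\alpha_G, \alpha_H$ and interpreting the left vertical as $\tr^G_H$ yields the square of the lemma, provided one identifies $p^K \circ \alpha_K$ with $\ti\tr^K$ for $K \in \{G, H\}$.

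The main obstacle is this last identification. The defining property $F \circ \ti\tr^K = \tr^K$ only pins $\ti\tr^K$ down up to the ambiguity of lifting along $F$. To resolve this I would either (i) take $p^K \circ \alpha_K$ as the working definition of $\ti\tr^K$, in which case the lemma is immediate but one must verify this agrees with the model for $\ti\tr^K$ in the rest of the paper; or (ii) work directly with the equivariant Pontryagin-Thom construction, embedding $G/K$ equivariantly in a $G$-representation $V$ and comparing the resulting equivariant collapse $S^V \to (G/K)_+ \wedge S^V$ with the Adams map $\alpha_K$. The second route has the advantage of being a point-set identification compatible with the orthogonal spectra used earlier in the paper, and is essentially the content of Madsen's argument. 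Once $p^K \circ \alpha_K = \ti\tr^K$ is established, the naturality square above delivers the lemma with no further work.
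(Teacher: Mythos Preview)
The paper does not supply a proof of this lemma; it simply recalls the statement from \cite[(4.1.6)]{madsen_survey}. Your sketch via the Adams isomorphism for the free $G$-spectrum $E = EG_+ \wedge \Sigma^\infty_+ X$, reducing the square to naturality of $F$ applied to the projection $p : E \to \Sigma^\infty_+ X$, is exactly the standard argument one finds in the cited literature (see also \cite[II.5--7]{lms}). The residual identification $p^K \circ \alpha_K \simeq \ti\tr^K$ is not an obstacle in practice: in both Madsen's survey and LMS the external transfer $\ti\tr^K$ is \emph{defined} as this composite (equivalently, as the equivariant Pontryagin--Thom collapse for $G/K \hookrightarrow V$ followed by the projection $EG_+ \to S^0$), so your option (i) is the correct reading and no separate comparison is required.
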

We end our exposition with the tom Dieck splitting, which we state for unbased $G$-spaces $X$. For a suspension spectrum $\Sigma^\infty_+ X$, the sum of the composites
\[ \xymatrix{ \Sigma^\infty_+ (X^H)_{hG/H} \ar[r]^-{s^H} & ((\Sigma^\infty_+ X)^H)_{hG/H} \ar[r]^-{\ti\tr^{G/H}} & ((\Sigma^\infty_+ X)^H)^{G/H} \ar[r]^-\cong & (\Sigma^\infty_+ X)^G } \]
gives an equivalence \cite[V.11.1]{lms}
\[ (\Sigma^\infty_+ X)^G \simeq \bigvee_{H \leq G} \Sigma^\infty_+ (X^H)_{hG/H}. \]
In addition, the restriction map $r^G$ agrees with the projection map of $(\Sigma^\infty_+ X)^G$ to the summand $\Sigma^\infty_+ X^G$ of the above splitting. The inclusion of fixed points has a more interesting effect.
\begin{prop}\label{prop:frobenius_on_splitting}
Let $H \leq G$ be an inclusion of abelian groups. Then the inclusion of fixed points
\[ \bigvee_{K \leq G} \Sigma^\infty_+ (X^K)_{hG/K} \simeq (\Sigma^\infty_+ X)^{G} \overset{F}\ra (\Sigma^\infty_+ X)^H \simeq \bigvee_{L \leq H} \Sigma^\infty_+ (X^L)_{hH/L} \]
takes the summand of $K$ to the summand of $L = H \cap K$ by the composite
\[ \xymatrix @C=4em{
\Sigma^\infty_+ (X^K)_{hG/K} \ar[r]^-{\tr^{G/K}_{H/(H \cap K)}} & \Sigma^\infty_+ (X^K)_{hH/(H \cap K)} \ar[r]^-{\Sigma^\infty \iota} & \Sigma^\infty_+ (X^{H \cap K})_{hH/(H \cap K)}  }\]
\end{prop}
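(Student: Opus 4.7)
The plan is to factor the Frobenius $F \colon E^G \to E^H$ (with $E = \Sigma^\infty_+ X$) through intermediate fixed points and apply Lemmas \ref{lem:frobenius_inclusion} and \ref{lem:frobenius_transfer} one at a time. Setting $L = H \cap K$, and using that $G$ is abelian so that $HK \leq G$ and $H/L \cong HK/K$, I write
\[ F \colon E^G = (E^K)^{G/K} \xrightarrow{F_1} (E^K)^{H/L} = E^{HK} \xrightarrow{F_2} (E^L)^{H/L} = E^H, \]
where $F_1$ is the Frobenius for the subgroup inclusion $H/L \leq G/K$ applied to the $G/K$-spectrum $E^K$, and $F_2$ is obtained by applying $(-)^{H/L}$ to the Frobenius $E^K \hookrightarrow E^L$ for $L \leq K$.

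For $F_1$, I first rewrite the $K$-summand inclusion $i_K$ as $(s^K)^{G/K} \circ \ti\tr^{G/K}_{\Sigma^\infty_+ X^K}$ by naturality of $\ti\tr^{G/K}$ along $s^K$. Naturality of the Frobenius along the $G/K$-equivariant map $s^K \colon \Sigma^\infty_+ X^K \to E^K$ then gives $F_1 \circ (s^K)^{G/K} = (s^K)^{H/L} \circ F_1'$, where $F_1'$ is the Frobenius on the suspension spectrum $\Sigma^\infty_+ X^K$. Applying Lemma \ref{lem:frobenius_transfer} to the $G/K$-space $X^K$ with subgroup $H/L$ yields $F_1' \circ \ti\tr^{G/K}_{\Sigma^\infty_+ X^K} = \ti\tr^{H/L}_{\Sigma^\infty_+ X^K} \circ \tr^{G/K}_{H/L}$, so altogether
\[ F_1 \circ i_K = (s^K)^{H/L} \circ \ti\tr^{H/L}_{\Sigma^\infty_+ X^K} \circ \tr^{G/K}_{H/L}. \]

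For $F_2$, I take $H/L$-fixed points of the square in Lemma \ref{lem:frobenius_inclusion} applied to $L \leq K$ to obtain $F_2 \circ (s^K)^{H/L} = (s^L)^{H/L} \circ (\Sigma^\infty_+ \iota)^{H/L}$, where $\iota \colon X^K \hookrightarrow X^L$ is the inclusion of fixed points. Naturality of $\ti\tr^{H/L}$ along the $H/L$-equivariant map $\iota$ gives $(\Sigma^\infty_+ \iota)^{H/L} \circ \ti\tr^{H/L}_{\Sigma^\infty_+ X^K} = \ti\tr^{H/L}_{\Sigma^\infty_+ X^L} \circ (\iota_*)_{hH/L}$. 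Combining,
\[ F \circ i_K = F_2 \circ F_1 \circ i_K = \bigl((s^L)^{H/L} \circ \ti\tr^{H/L}_{\Sigma^\infty_+ X^L}\bigr) \circ (\iota_*)_{hH/L} \circ \tr^{G/K}_{H/L} = i_L \circ (\iota_*)_{hH/L} \circ \tr^{G/K}_{H/L}, \]
where $i_L$ is the $L$-summand inclusion of $E^H$ in the tom Dieck splitting. Since $i_L$ is the inclusion of a wedge summand, the projection of $F \circ i_K$ onto any $L' \neq L$ summand vanishes, and the projection onto the $L$-summand is exactly the claimed composite $\Sigma^\infty \iota \circ \tr^{G/K}_{H/L}$.

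The main obstacle I anticipate is the careful bookkeeping needed to verify that the identifications $E^{HK} = (E^K)^{H/L} = (E^L)^{HK/L}$ and the naturality statements above hold with the correct residual actions; here abelianness of $G$ is essential, so that the subquotients $H/L$, $HK/K$, and $K/L$ are honest subgroups of the appropriate quotient groups. A secondary point to treat carefully is that $\ti\tr^?$ and $F$ are being invoked on objects like $E^K$ that are not literal suspension spectra, which must be handled by routing every application through the naturality of these constructions along the section maps $s^K$ and $s^L$ from honest suspension spectra.
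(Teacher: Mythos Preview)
Your proof is correct and follows essentially the same route as the paper. The paper organizes the argument into a single $3\times 3$ commutative diagram whose left column is $i_K$, bottom row is $F_2\circ F_1$ (factoring through $(\Sigma^\infty_+ X)^{HK}$), and right column is $i_L$; its four squares commute by exactly the four facts you invoke---Lemma~\ref{lem:frobenius_transfer}, Lemma~\ref{lem:frobenius_inclusion}, naturality of $F$ along $s^K$, and naturality of $\ti\tr^{H/L}$ along $\iota$---so your prose is a faithful unpacking of that diagram.
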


\begin{proof}
Note that the transfer makes sense because the map $H/(H \cap K) \to G/K$ is injective. In the following diagram, the top-left square commutes by Lemma \ref{lem:frobenius_transfer}, the bottom-right by Lemma \ref{lem:frobenius_inclusion}, and the remaining two by naturality of $F$ and $\ti\tr$.
\[ \xymatrix @C=5em{
\Sigma^\infty_+ (X^K)_{hG/K} \ar[r]^-{\tr^{G/K}_{H/(H \cap K)}} \ar[d]^-{\ti\tr^{G/K}} & \Sigma^\infty_+ (X^K)_{hH/(H\cap K)} \ar[r]^-{(\Sigma^\infty \iota)_{hH/(H\cap K)}} \ar[d]^-{\ti\tr^{H/(H \cap K)}} & \Sigma^\infty_+ (X^{H \cap K})_{hH/(H\cap K)} \ar[d]^-{\ti\tr^{H/(H \cap K)}} \\
(\Sigma^\infty_+ X^K)^{G/K} \ar[r]^-F \ar[d]^-{s^K} & (\Sigma^\infty_+ X^K)^{H/(H\cap K)} \ar[r]^-{(\Sigma^\infty \iota)^{H/(H\cap K)}} \ar[d]^-{s^K} & (\Sigma^\infty_+ X^{H \cap K})^{H/(H\cap K)} \ar[d]^-{s^{(H \cap K)}} \\
(\Sigma^\infty_+ X)^G \ar[r]^-F & ((\Sigma^\infty_+ X)^K)^{H/(H \cap K)} \ar[r]^-F & (\Sigma^\infty_+ X)^H
} \]
Note that the bottom-center may be identified with $(\Sigma^\infty_+ X)^{HK}$.
\end{proof}
\noindent
As a special case, when $K \leq H \leq G$, the summand $\Sigma^\infty_+ (X^K)_{hG/K}$ is taken to $\Sigma^\infty_+ (X^K)_{hH/K}$ by the transfer $\tr^{G/K}_{H/K}$. When 
% \[ \xymatrix @C=4em{ \Sigma^\infty_+ (X^K)_{hG/K} \ar[r]^-{\tr^{G/K}_{H/K}} & \Sigma^\infty_+ (X^K)_{hH/K} } \]
$H \leq K = G$, the summand $\Sigma^\infty_+ X^G$ is taken to $\Sigma^\infty_+ X^H$ by the inclusion $\Sigma^\infty \iota$.
% \[ \xymatrix @C=4em{ \Sigma^\infty_+ X^G \ar[r]^-{\Sigma^\infty \iota} & \Sigma^\infty_+ X^H } \]

\section{$K$-theory and topological cyclic homology of $DS^1$.}\label{sec:tc}

In this final section we calculate $TC(DS^1)$ after $p$-completion, proving Theorem \ref{thm:intro_tc_splitting} and Corollary \ref{cor:intro_coassembly_zero}. Throughout we assume the basic properties of $p$-completion of spectra \cite[\S 1-2]{bousfield_spectra}. We first recall the definition of $TC$. Of course, the primary use of $TC(R)$ is to approximate $K$-theory by the cyclotomic trace map $K(R) \to TC(R)$ \cite{bhm}. 
\begin{df}  \cite[\S 2.5]{madsen_survey}
If $R$ is a ring spectrum and $p$ is a prime, we regard $THH(R)$ as a genuine $C_{p^n}$ spectrum for all $n$ by forgetting along the homomorphisms $C_{p^n} \to S^1$. In this context the inclusion of fixed points map \eqref{eq:frobenius} is named the \emph{Frobenius} map
\[ F: \xymatrix{ THH(R)^{C_{p^n}} \ar[r] & THH(R)^{C_{p^{n-1}}} }. \]
The \emph{restriction} map $R$ is the composition of $r$ from \eqref{eq:restriction} and the cyclotomic structure map $c$ of $THH$ \eqref{eq:cyclotomic}: 
\[ R: \xymatrix @C=4em{ THH(R)^{C_{p^n}} \ar[r]^-{(r^{C_p})^{C_{p^{n-1}}}} & (\Phi^{C_p}THH(R))^{C_{p^{n-1}}} \ar[r]^-{(c_p)^{C_{p^{n-1}}}}_-\sim & THH(R)^{C_{p^{n-1}}} }. \]
The \emph{topological restriction homology} $TR(R;p)$ is the homotopy inverse limit of the fixed points $THH(R)^{C_{p^n}}$ under the maps $R$. As $F$ and $R$ commute, this limit inherits a self-map $(F - \id)$, whose homotopy fiber is the \emph{topological cyclic homology} $TC(R;p)$. Up to $p$-completion this is equivalent to $TC(R)$, defined as above but using all integers instead of just powers of $p$.
\end{df}

In the example of $R = DS^1$, $THH(R)$ is an equivariant suspension spectrum, up to a shift by a trivial representation (Corollary \ref{cor:intro_thh_splitting}):
\[ THH(D S^1) \simeq \Sph \vee \Sigma^{-1}\left( \bigvee_{n=1}^\infty \Sigma^\infty_+ S^1_{C_n} \right) \]
Here the $C_n$ subscript denotes orbits. Of course, $DS^1$ is an augmented ring spectrum, so the first summand $\Sph$ splits off in a way that respects $F$ and $R$, and we may safely ignore it. We let $\ti{THH}(DS^1)$ refer to the remaining summands. We rewrite this as
\[ \ti{THH}(D S^1) \simeq \Sigma^{-1}\left( \bigvee_{n=1}^\infty \Sigma^\infty_+ S^1_{C_n} \right) = \Sigma^{-1}\Sigma^\infty_+ X, \qquad X = \coprod_{n \geq 1} S^1_{C_n}. \]

By Remark \ref{rem:cm}, each map $c_m$ sends the $C_m$-fixed points of the $(mn)$th summand of $X$ to the $n$th summand by an equivalence, for all $n \geq 1$. These maps must agree up to $S^1$-equivariant homotopy with the obvious homeomorphism
\begin{equation}\label{eq:easy_cm}
\xymatrix{ (S^1_{C_{mn}})^{C_m} = S^1_{C_{mn}} \ar[r]^-\sim & S^1_{C_n} }
\end{equation}
(The equivariance takes the $S^1/C_m$-action on the left to the $S^1$-action on the right along a group isomorphism $S^1 \cong S^1/C_m$.) In fact, with a little more work one can check that $c_m$ agrees with \eqref{eq:easy_cm} on the nose.

As in \cite{bhm}, because $THH(DS^1)$ gives a (shift of an) equivariant suspension spectrum $\Sigma^\infty_+ X$, the restriction map is split. (In particular, the $r^{C_p}$ is split by $s^{C_p}$.) Along the tom Dieck splitting, this identifies the restriction map $R$ with a map that simply deletes the homotopy orbit spectrum $\Sigma^\infty_+ X_{hC_{p^n}}$:
\[ \xymatrix{
\Sigma^\infty_+ X_{hC_{p^n}} & \Sigma^\infty_+ (X^{C_p})_{hC_{p^{n-1}}} \ar[d]_-\cong^-{c_p} & \ldots & \Sigma^\infty_+ (X^{C_{p^{n-1}}})_{hC_p} \ar[d]_-\cong^-{c_p} & \Sigma^\infty_+ X^{C_{p^n}} \ar[d]_-\cong^-{c_p} \\
\mbox{} & \Sigma^\infty_+ X_{hC_{p^{n-1}}} & \ldots & \Sigma^\infty_+ (X^{C_{p^{n-2}}})_{hC_p} & \Sigma^\infty_+ X^{C_{p^{n-1}}}
} \]
On the other hand, by Proposition \ref{prop:frobenius_on_splitting} the action of $F$ is by transfer maps and a single inclusion of fixed points map:
\[ \xymatrix{
\Sigma^\infty_+ X_{hC_{p^n}} \ar[dr]_(.4){\tr^{C_{p^n}}_{C_{p^{n-1}}}} &
\ldots &
\Sigma^\infty_+ (X^{C_{p^{n-2}}})_{hC_{p^2}} \ar[dr]_(.4){\tr^{C_{p^2}}_{C_p}} &
\Sigma^\infty_+ (X^{C_{p^{n-1}}})_{hC_p} \ar[dr]_(.4){\tr^{C_p}} &
\Sigma^\infty_+ X^{C_{p^n}} \ar[d]^-{\Sigma^\infty \iota} \\
\mbox{} &
\Sigma^\infty_+ X_{hC_{p^{n-1}}} &
\ldots &
\Sigma^\infty_+ (X^{C_{p^{n-2}}})_{hC_p} &
\Sigma^\infty_+ X^{C_{p^{n-1}}}
} \]

\noindent
It follows that the inverse limit of the restriction maps is a product:
\[ \ti{TR}(DS^1;p) \simeq \Sigma^{-1} \prod_{j\geq 0} \Sigma^\infty_+ X_{hC_{p^j}} \]
Implicit in this is an identification of $X^{C_{p^n}}$ with $X$ along the maps $c_p$, for all $n$. The Frobenius map $F$ acts on this product by sending the $j$th factor to the $(j-1)$st factor by a transfer if $j \geq 1$. On the 0th factor, the action becomes
\[ F: \xymatrix{ \Sigma^\infty_+ X \ar@{<-}[r]^-{c_p}_-\cong & \Sigma^\infty_+ X^{C_p} \ar[r]^-{\Sigma^\infty \iota} & \Sigma^\infty_+ X } \]
which we abbreviate to $\Delta_p$. In the classical case where $X$ is a free loop space, $\Delta_p$ is a $p$-fold power map. In our case, $\Delta_p$ sends the $n$th summand of $X$ to the $(pn)$th summand by an equivalence.

Next we form the map of fiber sequences (cf. \cite[5.19]{bhm})
\begin{equation}\label{eq:fiber_seqs}
\xymatrix{
\Sigma^{-1} \Sigma^\infty_+ X \ar[r] \ar[d]^-{\Delta_p - \id} &
\Sigma^{-1} \prod_{j\geq 0} \Sigma^\infty_+ X_{hC_{p^j}} \ar[r] \ar[d]^-{F - \id} &
\Sigma^{-1} \prod_{j\geq 1} \Sigma^\infty_+ X_{hC_{p^j}} \ar[d]^-{F - \id} \\
\Sigma^{-1} \Sigma^\infty_+ X \ar[r] &
\Sigma^{-1} \prod_{j\geq 0} \Sigma^\infty_+ X_{hC_{p^j}} \ar[r] &
\Sigma^{-1} \prod_{j\geq 0} \Sigma^\infty_+ X_{hC_{p^j}} }
\end{equation}
The fiber of the middle column is our desired $\ti{TC}(DS^1;p)$.
The fiber of the right-hand column rearranges to the homotopy limit of $\Sigma^{-1} \Sigma^\infty_+ X_{hC_{p^j}}$ under the transfer maps $\Sigma^{-1}\tr^{C_{p^n}}_{C_{p^{n-1}}}$. By Lemma \ref{lem:inverse_lim_of_transfers} this agrees up to $p$-completion with $\Sigma^\infty_+ X_{hS^1}$. We observe for all $n \geq 1$ that
\[ (S^1_{C_n})_{hS^1} \simeq (BC_n \times S^1_{C_n})_{h(S^1/C_n)} \simeq BC_n, \]
and so the fiber of the right-hand column becomes the wedge of suspension spectra of $BC_n$ for all $n \geq 1$.

We next make explicit a lemma that was used implicitly in \cite[5.17]{bhm}, to assemble this together into a single homotopy pullback square.
\begin{lem}
Given a map of split fiber sequences of spectra
\[
\xymatrix{
A \ar[d]^-f \ar[r]^-{i_A} & A \times B \ar[d]^-{\phi} \ar[r]^-{\pi_B} & B \ar[d]^-g \\
A' \ar[r]^-{i_{A'}} & A' \times B' \ar[r]^-{\pi_{B'}} & B' }
\qquad 
\xymatrix @R=-0.5em { {} \\ {\phi = \begin{bmatrix} f & h \\ 0 & g \end{bmatrix}} } \]
The homotopy fiber of $\phi$ sits in a homotopy pullback square
\[ \xymatrix @C=5em {
F(\phi) \ar[d]^-{\pi_A \circ i(\phi)} \ar[r]^-{F(\pi_B,\pi_{B'})} & F(g) \ar[d]^-{-h \circ i(g)} \\
A \ar[r]^-f & A' } \]
where $i(f)$ denotes the inclusion of the homotopy fiber.
\end{lem}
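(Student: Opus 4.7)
The plan is to compare the square against the canonical fiber sequence on vertical homotopy fibers. The rows of the hypothesized diagram are split fiber sequences of spectra, so the standard $3 \times 3$ lemma produces a fiber sequence
\[ F(f) \longrightarrow F(\phi) \longrightarrow F(g) \]
whose maps are induced by $i_A$ and $\pi_B$. In the stable category this extends to a Puppe triangle $F(f) \to F(\phi) \to F(g) \overset{\partial}\longrightarrow \Sigma F(f)$, and the square in the statement is homotopy Cartesian precisely when $\partial$ factors through $A'$ as the composite of $-h \circ i(g)$ with the connecting map of the fiber triangle of $f$.

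First I would verify that the square commutes up to a specified homotopy. Given a point $((a,b),(\gamma_1,\gamma_2)) \in F(\phi)$ with $\gamma_1 \colon f(a)+h(b) \to 0$ in $A'$ and $\gamma_2 \colon g(b) \to 0$ in $B'$, the path $\gamma_1$ translates, after splitting off $h(b)$, into a homotopy from $f(a)$ to $-h(b)$. This gives a natural comparison map from $F(\phi)$ to the homotopy pullback $P$ of the bottom and right maps, and the lemma reduces to showing this comparison is an equivalence.

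To check the equivalence I would pass to vertical fibers on both sides. Using the split fiber sequence $B \to A \times B \to A$, the fiber of $\pi_A \circ i(\phi) \colon F(\phi) \to A$ is identified with the homotopy fiber of the restriction $\phi|_B \colon B \to A' \times B'$, $b \mapsto (h(b),g(b))$. The fiber of $-h \circ i(g) \colon F(g) \to A'$ admits the same point-set description after negating the path coordinate landing in $A'$. A direct inspection then shows that the map between these fibers induced by the commutativity homotopy above is precisely this identification, so the square is homotopy Cartesian by the standard criterion.

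The main obstacle is the sign bookkeeping. The appearance of $-h$ rather than $h$ in the statement is exactly what is needed to convert a nullhomotopy of $f(a)+h(b)$ into a homotopy from $f(a)$ to $-h(b)$; keeping this sign consistent across the Puppe construction and the comparison on fibers is where care is needed, but once that is in place the equivalence on fibers is automatic.
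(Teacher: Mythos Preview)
Your argument is correct, but it takes a more elaborate route than the paper's. The paper first passes to loop spaces levelwise, so that $-h$ becomes literal reversal of loops and $\phi$ sends $(\alpha,\beta)$ to $(f(\alpha)\cdot h(\beta), g(\beta))$; it then observes that both $F(\phi)$ and the homotopy pullback of the displayed square are identified, up to homeomorphism on each spectrum level, with the \emph{same} space of data: a choice of $\alpha \in \Omega A$, $\beta \in \Omega B$, a nullhomotopy of $g(\beta)$, and a homotopy $f(\alpha) \sim h(\beta)$. That direct identification is essentially what you write down in your second paragraph when you build the comparison map $F(\phi) \to P$; once one has it, the separate verification on fibers in your third paragraph is unnecessary. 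Note also that your first paragraph on the Puppe boundary $\partial$ is never actually invoked---you state a factoring criterion for $\partial$ but then establish the lemma by a different method. The advantage of your organization is that it is more structural and would port to settings without convenient point-set models; the paper's approach is shorter because it bypasses the intermediate fiber comparison entirely.
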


\begin{proof}
Take a shift and loop space of all spectra in the diagram. Then the negation $-h$ may be interpreted concretely as reversal of loops, and $\phi$ as taking a pair of loops $(\alpha,\beta)$ to $(f(\alpha) \cdot h(\beta), g(\beta))$, where $\cdot$ denotes concatenation. Under these conventions, on each spectrum level, both $F(\phi)$ and the homotopy pullback of the given square are identified up to homeomorphism with the space of choices of $\alpha \in \Omega A$, $\beta \in \Omega B$, a nullhomotopy of $g(\beta)$, and a homotopy $f(\alpha) \sim h(\beta)$. This identification may be done in a manner that is functorial in $A$ and $B$ and hence commutes with the structure maps of the two spectra.
\end{proof}

Applying this lemma to \eqref{eq:fiber_seqs} gives a homotopy pullback square after $p$-completion
\[ \xymatrix @C=5em @R=2em{
\ti{TC}(D S^1) \ar[r] \ar[d] & \bigvee_{n=1}^\infty \Sigma^\infty_+ BC_n \ar[d]^-{\bigvee \tr^{S^1}} \\
\Sigma^{-1}\bigvee_{n=1}^\infty \Sigma^\infty_+ S^1_{C_n} \ar[r]^-{\Delta_p - \id} & \Sigma^{-1}\bigvee_{n=1}^\infty \Sigma^\infty_+ S^1_{C_n}
} \]
We adopt the convention that $p^k$ is the highest power of $p$ dividing $n$. As the covering maps $BC_{p^k} \ra BC_n$ and $S^1_{C_{p^k}} \ra S^1_{C_n}$ are equivalences after $p$-completion, the square simplifies to
\[ \xymatrix @C=5em @R=2em{
\ti{TC}(D S^1) \ar[r] \ar[d] & \bigvee_{n=1}^\infty \Sigma^\infty_+ BC_{p^k} \ar[d]^-{\bigvee \tr^{S^1}} \\
\Sigma^{-1}\bigvee_{n=1}^\infty \Sigma^\infty_+ S^1_{C_{p^k}} \ar[r]^-{\Delta_p - \id} & \Sigma^{-1}\bigvee_{n=1}^\infty \Sigma^\infty_+ S^1_{C_{p^k}}
} \]
% This splits into an infinite wedge of squares, the easiest of which gives $TC(\Sph)^\wedge_p$:
% \[ \xymatrix{
% \Sph \vee \Sigma \mathbb{CP}^\infty_{-1} \ar[r] \ar[d] & \Sigma^\infty\Sigma_+\mathbb{CP}^\infty \ar[d]^-{\trf} \\
% \Sph \ar[r]^-0 & \Sph
% } \]
This splits into an infinite wedge of squares. There is one square for each equivalence class of positive integers, where $n \sim m$ if $\frac{m}{n}$ is a power of $p$.
Each equivalence class gives the same pullback square, and we denote the homotopy pullback by $E$:
\[ \xymatrix{
E \ar[r] \ar[d] & \bigvee_{k=0}^\infty \Sigma^\infty_+ BC_{p^k} \ar[d]^-{\bigvee \tr^{S^1}} \\
\Sigma^{-1}\bigvee_{k=0}^\infty \Sigma^\infty_+ S^1_{C_{p^k}} \ar[r]^-{\Delta_p - \id} & \Sigma^{-1}\bigvee_{k=0}^\infty \Sigma^\infty_+ S^1_{C_{p^k}}
} \]
In this final square, the cofiber of the bottom row is the colimit of the system of equivalences
\[ \xymatrix{ \Sigma^{-1} \Sigma^\infty_+ S^1 \ar[r]^-{\Delta_p}_-\sim & \Sigma^{-1} \Sigma^\infty_+ S^1_{C_p} \ar[r]^-{\Delta_p}_-\sim & \Sigma^{-1} \Sigma^\infty_+ S^1_{C_{p^2}} \ar[r]^-{\Delta_p}_-\sim & \ldots } \]
This is a single copy of $\Sigma^{-1} \Sigma^\infty_+ S^1$. We observe that $\bigvee_{k=0}^\infty \Sigma^\infty_+ BC_{p^k}$ maps into this by a wedge of circle transfers $\tr^{S^1}$, and we conclude that $E$ is the fiber of this map.

We observed that $\ti{TC}(DS^1)$ was a summand of $TC(DS^1)$ because $DS^1$ is an augmented ring. Of course, the complementary piece is $TC(\Sph)$, which is known to agree up to $p$-completion with $\Sph \vee \Sigma \mathbb{CP}^\infty_{-1}$ \cite[0.1]{bergsaker2010homology}. This finishes the proof of Theorem \ref{thm:intro_tc_splitting}:
\[ TC(D S^1) \overset{\mbox{}^\wedge_p}\simeq \Sph \vee \Sigma \mathbb{CP}^\infty_{-1} \vee \bigvee_{n \in \mathbb{N}} E \]

We end with a calculation that leads to Corollary \ref{cor:intro_coassembly_zero}. The calculation proceeds as follows. First, our splitting of $TC(DS^1)^\wedge_p$ gives a model before $p$-completion whose integral homology groups $H_n(-;\Z)$ are straightforward to calculate; we summarize the results in the table below.
\begin{table}[h]\label{table:homology}
\[ \begin{array}{c|llllllll}
\textup{spectrum} & H_{-2} & H_{-1} & H_0 & H_1 & H_2 & H_3 & H_4 & \ldots \\\hline
\Sph & 0 & 0 & \Z & 0 & 0 & 0 & 0 & \ldots \\
\Sigma\mathbb{CP}^\infty_{-1} & 0 & \Z & 0 & \Z & 0 & \Z & 0 & \ldots \\
E & \Z & 0 & \bigoplus_{k \geq 0} \Z & \bigoplus_{k \geq 0} \Z/p^k & 0 & \bigoplus_{k \geq 0} \Z/p^k & 0 & \ldots
\end{array} \]
\caption{Integral homology groups of the components of $TC(DS^1)^\wedge_p$.} \end{table}\vspace{-1em}
Along the Hurewicz map, these are isomorphic to the homotopy groups $\pi_n(-;\Z)$, so long as we work modulo torsion prime to $p$, and only in the range $n \leq 2p - 6$ \cite[4.1]{arlettaz1996hurewicz}. We then feed these results into the splittable short exact sequence \cite[2.5]{bousfield_spectra}
\[ 0 \ra \Ext(\Z/p^\infty,\pi_n(X)) \ra \pi_n(X^\wedge_p) \ra \Hom(\Z/p^\infty,\pi_{n-1}(X)) \ra 0 \]
and arrive at the homotopy groups for $TC(DS^1)^\wedge_p$ in this range.

We give the rationalizations of these groups in the table below. They repeat with period 4 starting at $\pi_2^\Q$, but it is important to remember that this method only gives the correct answer for $\pi_n^\Q$ when $n \leq 2p - 6$, or equivalently $p \geq \frac{n}2 + 3$.

\begin{table}[h]
\[ \begin{array}{c|ccccccccc}
\textup{spectrum} & \pi_{-2}^\Q & \pi_{-1}^\Q & \pi_0^\Q & \pi_1^\Q & \pi_2^\Q & \pi_3^\Q & \pi_4^\Q & \pi_5^\Q & \pi_6^\Q \\\hline
K(\Sph) & 0 & 0 & \Q & 0 & 0 & 0 & 0 & \Q & 0 \\
D S^1 \sma K(\Sph) & 0 & \Q & \Q & 0 & 0 & 0 & \Q & \Q & 0 \\
TC(\Sph)^\wedge_p & 0 & \Q_p & \Q_p & \Q_p & 0 & \Q_p & 0 & \Q_p & 0 \\
(D S^1 \sma TC(\Sph))^\wedge_p & \Q_p & \Q_p^2 & \Q_p^2 & \Q_p & \Q_p & \Q_p & \Q_p & \Q_p & \Q_p \\
E^\wedge_p & \Q_p & 0 & A & B & 0 & B & 0 & B & 0 \\
TC(D S^1)^\wedge_p & A & \Q_p & A & B_\infty & 0 & B_\infty & 0 & B_\infty & 0 \\
\end{array} \]
\caption{Rational homotopy groups of $TC(D S^1)^\wedge_p$ and related spectra.} \end{table}

\noindent
The vector spaces $A$, $B$, and $B_\infty$ are quite large. They are defined by
\[ \begin{array}{l}
A = \Ext(\Z/(p^\infty),\overset\infty\bigoplus\, \Z) \otimes \Q \\
B = \Ext(\Z/(p^\infty),\oplus_{k=0}^\infty\, \Z/p^k) \otimes \Q \\
B_\infty = \Ext(\Z/(p^\infty),\overset\infty\bigoplus \oplus_{k=0}^\infty\, \Z/p^k) \otimes \Q
\end{array} \]
and each one contains $\Q^n_p = \Ext(\Z/(p^\infty),\overset{n}\bigoplus\, \Z) \otimes \Q$ as a retract, for every $n \geq 0$.

Now we may draw conclusions about the rational behavior of $K(DS^1)$ under the coassembly map. We recall that the coassembly map is a natural transformation of the form
\[ c\alpha: \Phi(X) \ra F(X_+,\Phi(*)) \simeq DX \sma \Phi(*) \]
where $\Phi$ is any contravariant, homotopy-invariant functor from unbased finite complexes $X$ to spectra, with $\Phi(\emptyset) \simeq *$ (\cite[\S 5]{cohen2009umkehr}, cf. \cite{weiss1993assembly}). The two functors $K(D(-))$ and $TC(D(-))^\wedge_p$ both satisfy these hypotheses, and they are connected by the cyclotomic trace $\textup{trc}: K(DX) \to TC(DX)^\wedge_p$. By the formal properties of coassembly, this gives a commuting square:
\begin{equation}\label{eq:coassembly_square}
\xymatrix{
K(DS^1) \ar[r]^-{c\alpha} \ar[d]^-{\textup{trc}} & DS^1 \sma K(\Sph) \ar[d]^-{\id \sma \textup{trc}} \\
TC(DS^1)^\wedge_p \ar[r]^-{c\alpha} & DS^1 \sma TC(\Sph)^\wedge_p }
\end{equation}
If $p$ is a regular prime \cite[Ch.1]{washington1997introduction}, then the trace map $K(\Sph) \to TC(\Sph)^\wedge_p$ on the right-hand side of \eqref{eq:coassembly_square} is rationally injective \cite[proof of 4.5.4]{madsen_survey}.
% (It may always be rationally injective, but this is not known.)

Suppose $i \geq 1$ and we choose a regular prime $p \geq 2i + 3$. Then the pattern in the above table extends to $\pi_{4i}^\Q$, and \eqref{eq:coassembly_square} becomes on $\pi_{4i}^\Q$ the square of abelian groups
\begin{equation*}
\xymatrix @R=1.5em @C=3em{
? \ar[r]^-{c\alpha} \ar[d] & \Q \ar[d]^-{\neq 0} \\
0 \ar[r]^-{c\alpha} & \Q^\wedge_p }
\end{equation*}
We conclude that the top horizontal map is zero, proving Corollary \ref{cor:intro_coassembly_zero}.

\bibliographystyle{amsalpha}
\bibliography{thhdx}{}

\newcommand{\etalchar}[1]{$^{#1}$}
\providecommand{\bysame}{\leavevmode\hbox to3em{\hrulefill}\thinspace}
\providecommand{\MR}{\relax\ifhmode\unskip\space\fi MR }
% \MRhref is called by the amsart/book/proc definition of \MR.
\providecommand{\MRhref}[2]{%
  \href{http://www.ams.org/mathscinet-getitem?mr=#1}{#2}
}
\providecommand{\href}[2]{#2}
\begin{thebibliography}{ABG{\etalchar{+}}14}

\bibitem[ABG{\etalchar{+}}14]{angeltveit2014relative}
V.~Angeltveit, A.~J. Blumberg, T.~Gerhardt, M.~Hill, T.~Lawson, and M.~Mandell,
  \emph{Relative cyclotomic spectra and topological cyclic homology via the
  norm}, arXiv preprint arXiv:1401.5001 (2014).

\bibitem[BHM93]{bhm}
M.~B{\"o}kstedt, W.~C. Hsiang, and I.~Madsen, \emph{The cyclotomic trace and
  algebraic {K}-theory of spaces}, Inventiones mathematicae \textbf{111}
  (1993), no.~1, 465--539 (English).

\bibitem[BM11]{blumberg2011derived}
A.~J. Blumberg and M.~A. Mandell, \emph{Derived {K}oszul duality and
  involutions in the algebraic {$K$}-theory of spaces}, Journal of Topology
  \textbf{4} (2011), no.~2, 327--342.

\bibitem[Coh87]{cohen1987model}
R.~L. Cohen, \emph{A model for the free loop space of a suspension}, Algebraic
  topology, Springer, 1987, pp.~193--207.

\bibitem[DGM12]{dundas2012local}
B.~I. Dundas, T.~G. Goodwillie, and R.~McCarthy, \emph{The local structure of
  algebraic {$K$}-theory}, vol.~18, Springer Science \& Business Media, 2012.

\bibitem[Laz04]{lazarev2004spaces}
A.~Lazarev, \emph{Spaces of multiplicative maps between highly structured ring
  spectra}, Categorical Decomposition Techniques in Algebraic Topology,
  Springer, 2004, pp.~237--259.

\bibitem[Mad95]{madsen_survey}
I.~Madsen, \emph{Algebraic ${K}$-theory and traces}, Current Developments in
  Mathematics, International Press, 1995.

\bibitem[Mal15]{part1}
C.~Malkiewich, \emph{On the topological {H}ochschild homology of {$DX$}}, arXiv
  preprint arXiv:1505.06778 (2015).

\bibitem[MMSS01]{mmss}
M.~A. Mandell, J.~P. May, S.~Schwede, and B.~Shipley, \emph{Model categories of
  diagram spectra}, Proceedings of the London Mathematical Society \textbf{82}
  (2001), no.~02, 441--512.

\end{thebibliography}


\newcommand{\etalchar}[1]{$^{#1}$}
\providecommand{\bysame}{\leavevmode\hbox to3em{\hrulefill}\thinspace}
\providecommand{\MR}{\relax\ifhmode\unskip\space\fi MR }
% \MRhref is called by the amsart/book/proc definition of \MR.
\providecommand{\MRhref}[2]{%
  \href{http://www.ams.org/mathscinet-getitem?mr=#1}{#2}
}
\providecommand{\href}[2]{#2}
\begin{thebibliography}{BMSM86}

\bibitem[ABG{\etalchar{+}}14]{angeltveit2014relative}
V.~Angeltveit, A.~J. Blumberg, T.~Gerhardt, M.~Hill, T.~Lawson, and M.~Mandell,
  \emph{Relative cyclotomic spectra and topological cyclic homology via the
  norm}, arXiv preprint arXiv:1401.5001 (2014).

\bibitem[Arl96]{arlettaz1996hurewicz}
D.~Arlettaz, \emph{The exponent of the homotopy groups of {M}oore spectra and
  the stable {H}urewicz homomorphism}, Canadian Journal of Mathematics
  \textbf{48} (1996), no.~3, 483--495.

\bibitem[BHM93]{bhm}
M.~B{\"o}kstedt, W.~C. Hsiang, and I.~Madsen, \emph{The cyclotomic trace and
  algebraic {K}-theory of spaces}, Inventiones mathematicae \textbf{111}
  (1993), no.~1, 465--539 (English).

\bibitem[BM11]{blumberg2011derived}
A.~J. Blumberg and M.~A. Mandell, \emph{Derived {K}oszul duality and
  involutions in the algebraic {$K$}-theory of spaces}, Journal of Topology
  \textbf{4} (2011), no.~2, 327--342.

\bibitem[BMSM86]{h_infinity}
R.~R. Bruner, J.~P. May, M.~Steinberger, and J.~McClure, \emph{{$H_\infty$}
  ring spectra and their applications}, Lecture Notes in Mathematics
  \textbf{1176} (1986).

\bibitem[B{\"o}k85]{bokstedt1985topological}
M.~B{\"o}kstedt, \emph{Topological hochschild homology}, preprint, Bielefeld
  (1985).

\bibitem[Bou79]{bousfield_spectra}
A.~K. Bousfield, \emph{The localization of spectra with respect to homology},
  Topology \textbf{18} (1979), no.~4, 257--281.

\bibitem[BR10]{bergsaker2010homology}
H.~S. Bergsaker and J.~Rognes, \emph{Homology operations in the topological
  cyclic homology of a point}, Geometry \& Topology \textbf{14} (2010), no.~2,
  755--772.

\bibitem[CK09]{cohen2009umkehr}
R.~L. Cohen and J.~R. Klein, \emph{Umkehr maps}, Homology, Homotopy and
  Applications \textbf{11} (2009), no.~1, 17--33.

\bibitem[DGM12]{dundas2012local}
B.~I. Dundas, T.~G. Goodwillie, and R.~McCarthy, \emph{The local structure of
  algebraic {$K$}-theory}, vol.~18, Springer Science \& Business Media, 2012.

\bibitem[DHK85]{dhk}
W.~G. Dwyer, M.~J. Hopkins, and D.~M. Kan, \emph{The homotopy theory of cyclic
  sets}, Transactions of the American Mathematical Society \textbf{291} (1985),
  no.~1, 281--289.

\bibitem[HHR09]{hhr}
M.~A. Hill, M.~J. Hopkins, and D.~C. Ravenel, \emph{On the non-existence of
  elements of {K}ervaire invariant one}, arXiv preprint arXiv:0908.3724 (2009).

\bibitem[HM97]{hesselholt1997k}
L.~Hesselholt and I.~Madsen, \emph{On the {$K$}-theory of finite algebras over
  {W}itt vectors of perfect fields}, Topology \textbf{36} (1997), no.~1,
  29--101.

\bibitem[Jon87]{jones1987cyclic}
J.~D.~S. Jones, \emph{Cyclic homology and equivariant homology}, Inventiones
  mathematicae \textbf{87} (1987), no.~2, 403--423.

\bibitem[Laz04]{lazarev2004spaces}
A.~Lazarev, \emph{Spaces of multiplicative maps between highly structured ring
  spectra}, Categorical Decomposition Techniques in Algebraic Topology,
  Springer, 2004, pp.~237--259.

\bibitem[LMSM86]{lms}
L.~G. Lewis, J.~P. May, M.~Steinberger, and J.~E. McClure, \emph{Equivariant
  stable homotopy theory}, vol. 1213, Springer-Verlag Berlin-New York, 1986.

\bibitem[Mad95]{madsen_survey}
I.~Madsen, \emph{Algebraic ${K}$-theory and traces}, Current Developments in
  Mathematics, International Press, 1995.

\bibitem[Mal16]{part1}
C.~Malkiewich, \emph{Cyclotomic structure in the topological {H}ochschild
  homology of {$DX$}}, arXiv preprint arXiv:1505.06778 (2016).

\bibitem[May72]{may_geom_iterated}
J.~P. May, \emph{The geometry of iterated loop spaces}, Springer Berlin, 1972.

\bibitem[MM02]{mandell2002equivariant}
M.~A. Mandell and J.~P. May, \emph{Equivariant orthogonal spectra and
  {S}-modules}, Memoirs of the American Mathematical Society, no. 755, American
  Mathematical Society, 2002.

\bibitem[MMSS01]{mmss}
M.~A. Mandell, J.~P. May, S.~Schwede, and B.~Shipley, \emph{Model categories of
  diagram spectra}, Proceedings of the London Mathematical Society \textbf{82}
  (2001), no.~02, 441--512.

\bibitem[Wal85]{1126}
F.~Waldhausen, \emph{Algebraic {$K$}-theory of spaces}, Algebraic and geometric
  topology, Springer, 1985, pp.~318--419.

\bibitem[Was97]{washington1997introduction}
L.~C. Washington, \emph{Introduction to cyclotomic fields}, vol.~83, Springer
  Science \& Business Media, 1997.

\bibitem[WW93]{weiss1993assembly}
M.~Weiss and B.~Williams, \emph{Assembly}, Novikov conjectures, index theorems
  and rigidity \textbf{2} (1993), 332--352.

\end{thebibliography}

Department of Mathematics \\
University of Illinois at Urbana-Champaign \\
1409 W Green St \\
Urbana, IL 61801 \\
\texttt{cmalkiew@illinois.edu}

\end{document}